\newtheorem{theorem}{Theorem}
\newtheorem{lemma}[theorem]{Lemma}
\newtheorem{corollary}[theorem]{Corollary}
\newtheorem{oldtheorem}{Theorem}
\theoremstyle{definition}
\newtheorem{question}{Question}
\theoremstyle{remark}
\newtheorem{remark}{Remark}
\newtheorem{example}{Example}
\numberwithin{equation}{section}
\newcommand{\set}[1]{\left\{#1\right\}}
\newcommand{\abs}[1]{\left|#1\right|}
\newcommand{\brac}[1]{\left(#1\right)}
\newcommand{\nm}[1]{\lVert#1\rVert}
\newcommand{\bnm}[1]{\big\Vert#1\big\Vert}
\renewcommand{\L}[1]{\mathcal{L}^{#1}}
\newcommand{\D}{\mathbb{D}}
\newcommand{\RR}{\mathcal{R}}
\newcommand{\B}{\mathcal{B}}
\newcommand{\N}{\mathbb{N}}
\newcommand{\HH}{\mathcal{H}}
\newcommand{\C}{\mathbb{C}}
\renewcommand{\phi}{\varphi}
\newcommand{\T}{\mathbb{T}}
\newcommand{\BMOA}{\rm BMOA}
\newcommand{\LMOA}{\rm LMOA}
\newcommand{\VMOA}{\rm VMOA}
       \def\b{\beta}        
     \def\om{\omega}      
       \def\t{\theta}       
                  \def\z{\zeta}
                  \def\vp{\varphi}
\begin{document}

\title[Linear differential equations with slowly growing solutions]
{Linear differential equations with\\ slowly growing solutions}

\author{Janne Gr\"ohn, Juha-Matti Huusko and Jouni R\"atty\"a}
\date{\today}
\address{Department of Physics and Mathematics, University of Eastern Finland,\newline
\indent P.O.
Box 111, FI-80101 Joensuu, Finland}
\email{janne.grohn@uef.fi}

\address{Department of Physics and Mathematics, University of Eastern Finland,\newline
\indent P.O.
Box 111, FI-80101 Joensuu, Finland}
\email{juha-matti.huusko@uef.fi}

\address{Department of Physics and Mathematics, University of Eastern Finland,\newline
\indent P.O.
Box 111, FI-80101 Joensuu, Finland}
\email{jouni.rattya@uef.fi}

\thanks{The first author is supported in part by the Academy of Finland \#286877; the second author is supported in part by the Academy of Finland \#268009, and the Faculty of Science and Forestry of the University of Eastern Finland \#930349; and the third
author is supported in part by the Academy of Finland \#268009, the
Faculty of Science and Forestry of University of Eastern Finland \#930349,
La Junta de Andaluc\'ia (FQM210) and (P09-FQM-4468),
and the grants MTM2011-25502, MTM2011-26538 and MTM2014-52865-P}

\keywords{Growth of solution, Hardy space, linear differential equation}
\subjclass[2010]{Primary 30H10, 34M10}


\begin{abstract}
This research concerns coefficient conditions for linear differential equations in the unit disc
of the complex plane.
In the higher order case the separation of zeros (of maximal multiplicity) of solutions
is considered, while in the second order case  slowly growing solutions in $H^\infty$, 
$\rm{BMOA}$ and the Bloch space are discussed. A counterpart of the 
Hardy-Stein-Spencer formula for higher derivatives
is proved, and then applied to study solutions in the Hardy spaces.
\end{abstract}

\maketitle


\section{Introduction}

A fundamental question in the study of complex linear differential equations with analytic
coefficients in a complex domain is to relate the growth of coefficients to the growth
of solutions  and to the distribution of their zeros. In the case of fast growing solutions, Nevanlinna and
Wiman-Valiron theories have turned out to be very useful both in
the unit disc \cite{FR:2010, L:2008} and in the
complex plane \cite{L:1993, L:2008}.

We restrict ourselves to the case of the unit disc $\D = \{z\in\C : |z|<1 \}$.
In addition to methods above, theory of conformal maps has been
used to establish interrelationships between the growth of coefficients and the geometric
distribution (and separation) of zeros of solutions. This connection was one of the highlights
in Nehari's seminal paper~\cite{N:1949},
according to which a sufficient condition for the injectivity of a locally univalent meromorphic
function can be given in terms of its Schwarzian derivative. In the setting of differential
equations, Nehari's theorem \cite[Theorem~I]{N:1949} admits the following (equivalent) formulation:
if $A$ is analytic in $\D$ and
\begin{equation} \label{eq:neh}
  \sup_{z\in\D}\, |A(z)| (1-|z|^2)^2
\end{equation}
is at most one, then each non-trivial solution of
\begin{equation}\label{eq:de1}
  f''+Af=0
\end{equation}
has at most one zero in $\D$. Few years later, Schwarz showed \cite[Theorems~3--4]{S:1955}
that if $A$ is analytic in $\D$ then
zero-sequences of all non-trivial solutions of \eqref{eq:de1} are
separated in the hyperbolic metric if and only if \eqref{eq:neh} is finite.
The necessary condition, corresponding to Nehari's theorem, was given by Kraus~\cite{K:1932}.
For more recent developments based on localization of the classical results, see~\cite{CGHR:2013}.
In the case of higher order linear differential equations
\begin{equation} \label{eq:dek}
  f^{(k)}+A_{k-1}f^{(k-1)}+\cdots+A_1f'+A_0f=0, \quad k\in\N,
\end{equation}
with analytic coefficients $A_0,\dotsc, A_{k-1}$, 
this line of reasoning has not given complete results.
Some progress on the subject was obtained
by Kim and Lavie in seventies and eighties, among many other authors.

Nevanlinna and Wiman-Valiron theories, in the form they are known today,
are not sufficiently delicate tools to study slowly growing solutions of \eqref{eq:de1},
and hence different approach must be employed.
An~important breakthrough in this regard was \cite{P:1982},
where Pommerenke obtained a sharp sufficient condition for the analytic coefficient $A$
which places all solutions $f$ of \eqref{eq:de1} to the classical Hardy space $H^2$.
Pommerenke's idea was to use Green's formula twice to write the $H^2$-norm of $f$ in terms of
$f''$, employ the differential equation \eqref{eq:de1}, and then apply Carleson's theorem for
the Hardy spaces \cite[Theorem~9.3]{D:1970}. Consequently, the coefficient condition was given
in terms of Carleson measures. The leading idea of this (operator theoretic) approach
has been extended to study, for example,
solutions in the Hardy spaces \cite{R:2007}, Dirichlet type spaces \cite{HKR:2007} and
growth spaces \cite{GR:2016, HKR:2016}, to name a few instances.

Our intention is to establish sufficient conditions for the coefficient of \eqref{eq:de1}
which place all solutions to $H^\infty$, $\BMOA$ or to the Bloch space.
In principle, Pommerenke's original idea could be modified to cover these cases, but in practice,
this approach falls short
since either it is difficult to find a useful expression for the norm
in terms of the second derivative (in the case of $H^\infty$)
or the characterization of Carleson measures is not known
(in the cases of $\rm{BMOA}$ and Bloch). Concerning Carleson measures for the 
Bloch space, see \cite{GPP-GR:2008}. Curiously enough, the best known coefficient condition placing all solutions of \eqref{eq:de1} in the
Bloch space is obtained by  straightforward integration \cite{HKR:2016}.
Our approach takes advantage of the reproducing formulae,
and is different to ones in the literature.


\section{Main results}

Let $\mathcal{H}(\D)$ denote the collection
of functions analytic in $\D$, and let $m$
be the Lebesgue area measure, normalized so that $m(\D)=1$.
By postponing the rigorous definitions to the forthcoming sections,
we proceed to outline our results. We begin with the zero distribution of 
non-trivial solutions of the linear differential equation
\begin{equation} \label{eq:de3}
  f'''+A_{2}f''+A_1f'+A_0f=0
\end{equation}
with analytic coefficients. Note that zeros of non-trivial solutions of \eqref{eq:de3} are 
at most two-fold. Let $\vp_a(z) = (a-z)/(1-\overline{a} z)$, for $a,z\in\D$,
denote an~automorphism of $\D$ which coincides with its own inverse.


\begin{theorem}\label{Theorem:SchwarzForHigherOrder1}
Let $f$ be a non-trivial solution of \eqref{eq:de3}
where $A_0,A_1,A_2\in\mathcal{H}(\D)$.
\begin{itemize}
\item[\rm (i)]
  If
  \begin{equation} \label{eq:assumpp0}
    \sup_{z\in\D} \,  |A_j(z)| (1-|z|^2)^{3-j}< \infty, \quad j=0,1,2,
  \end{equation}
  then the sequence of two-fold zeros of $f$
  is a finite union of separated sequences.
  
\item[\rm (ii)]
  If
  \begin{equation} \label{eq:assumpp}
    \sup_{a\in\D} \, \int_{\D} |A_j(z)| (1-|z|^2)^{1-j} \big( 1- |\varphi_a(z)|^2 \big) \, dm(z)< \infty, \quad j=0,1,2,
  \end{equation}
  then the sequence of two-fold zeros of $f$
  is a finite union of uniformly separated sequences.
\end{itemize}
\end{theorem}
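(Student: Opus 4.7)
My plan is to exploit the M\"obius invariance of the hypotheses to reduce, for each two-fold zero $a$ of $f$, the problem to the case of a two-fold zero at the origin, and then to apply a local analysis of the resulting initial-value problem.

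For $a\in\D$, set $g(z)=f(\varphi_a(z))$. The chain rule produces an equation $g'''+B_2g''+B_1g'+B_0g=0$ in which each coefficient $B_j$ is a linear combination of terms $A_k(\varphi_a(z))$ multiplied by polynomial expressions in $\varphi_a'(z),\varphi_a''(z),\varphi_a'''(z)$. Using the identity $1-|\varphi_a(z)|^2=|\varphi_a'(z)|(1-|z|^2)$ together with the standard bounds $|\varphi_a^{(m)}(z)|\le C_m|\varphi_a'(z)|(1-|z|^2)^{1-m}$, a direct dimensional count shows that the exponents $3-j$ in (i) are exactly those that convert the hypothesis into a uniform estimate
\[
  \sup_{z\in\D}|B_j(z)|(1-|z|^2)^{3-j}\le C\cdot M,\qquad j=0,1,2,
\]
with $C$ independent of $a$; the analogous bookkeeping handles (ii). If $a$ is a two-fold zero of $f$, then $g$ has a two-fold zero at the origin while obeying an ODE with effectively the same coefficient bounds.

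For (i), normalize $g$ by a multiplicative constant so that $g''(0)=2$, giving $g(z)=z^2+\Oh(z^3)$ near $0$. From $g'''=-B_2g''-B_1g'-B_0g$ together with the bounds just obtained (which on $|z|\le 1/2$ yield $|B_j(z)|\le C_M$), a Picard-type iteration on a disc $|z|\le r_0(M)$ produces the quantitative control
\[
  |g(z)-z^2|\le C_M|z|^3,\qquad |g'(z)-2z|\le C_M|z|^2.
\]
These bounds preclude any simultaneous vanishing of $g$ and $g'$ in $0<|z|\le r_0$. Transferring back via $\varphi_a$, any two distinct two-fold zeros of $f$ are pseudohyperbolically separated by $\rho_0(M)>0$, and since a pseudohyperbolic disc of radius $1/2$ can contain only a uniformly bounded number of pairwise $\rho_0$-separated points, the two-fold zero sequence is a finite union of separated sequences.

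For (ii), the integral hypothesis implies the pointwise hypothesis of (i), by applying a sub-mean-value argument to the subharmonic functions $|A_j|$ on hyperbolic discs, so separation is already in hand. The new content is the upgrade to uniform separation, that is, to the Carleson interpolation condition. The plan is to use the local lower bounds from (i)---which, after unwinding the normalization by $f''(z_n)(1-|z_n|^2)^2$, give a quantitative lower bound on $|f|$ on a hyperbolic annulus of fixed size around each $z_n$---and to combine them with the Carleson-type integral hypothesis on the $A_j$, using the differential equation to translate the integral bound on the coefficients into a Carleson-measure bound on $\sum_n(1-|z_n|^2)\delta_{z_n}$; together with separation this is equivalent to uniform separation.

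The principal technical obstacle is the conformal-invariance bookkeeping in the reduction, since each polynomial in $\varphi_a', \varphi_a'', \varphi_a'''$ that appears in the transformed coefficients $B_j$ must be shown to carry precisely the weight $(1-|z|^2)^{3-j}$; this is what dictates the exponents $3-j$ (respectively $1-j$) in the hypotheses, and a miscount would break the whole reduction. The deeper difficulty is in part (ii): converting the integral coefficient condition into a Carleson-measure property of the zero sequence requires a substantive estimate and is not a formal consequence of (i).
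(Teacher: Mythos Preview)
Your approach to (i) is correct but genuinely different from the paper's. After the same conformal reduction you use a local Picard--Gronwall argument to show directly that two distinct two-fold zeros are pseudohyperbolically $\rho_0(M)$-apart; this already makes the two-fold zeros a \emph{single} separated sequence, so the ``finite union'' phrasing is superfluous in your route. The paper instead applies Jensen's formula to $g(z)/z^2$, integrates in $r$, and combines this with growth estimates for solutions of the transformed equation to bound $\int_\D \log^+|g/g''(0)|\,dm$, obtaining
\[
\sup_{a\in\mathcal Z}\ \sum_{z_k\in\mathcal Z\setminus\{a\}} \big(1-|\varphi_a(z_k)|^2\big)^2<\infty,
\]
which forces a finite union of separated sequences via an elementary lemma. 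Your argument is more elementary for (i); the paper's is what makes (ii) work.

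For (ii) your proposal has a genuine gap. You correctly note that the integral hypothesis implies the pointwise one, so separation is inherited; but the plan to ``translate the integral bound on the coefficients into a Carleson-measure bound on $\sum_n(1-|z_n|^2)\delta_{z_n}$'' via local lower bounds on $|f|$ and the differential equation is not a proof---no mechanism is given that turns pointwise lower bounds on $|f|$ near the $z_n$ into a Carleson bound on the zero measure, and separation together with a Carleson-type hypothesis on the \emph{coefficients} does not by itself yield one on the \emph{zeros}. The missing ingredient is precisely the zero-counting that Jensen's formula provides: after the conformal reduction, Jensen applied to $g(z)/z^2$ gives
\[
\sum_{0<|\varphi_a(z_k)|<r}\log\frac{r}{|\varphi_a(z_k)|}\le \frac{1}{2\pi}\int_0^{2\pi}\log^+\Big|\frac{g(re^{i\theta})}{g''(0)}\Big|\,d\theta+\log\frac{2}{r^2},
\]
and letting $r\to1^-$ the right-hand side is bounded uniformly in $a$ because the area-integral hypothesis on the $A_j$ transforms into $\sup_a\int_\D|B_j^{(n)}(z)|(1-|z|^2)^{2-j+n}\,dm(z)<\infty$, which controls the Nevanlinna-type growth of the normalized solution. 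This yields $\sup_{a\in\mathcal Z}\sum_{z_k\in\mathcal Z\setminus\{a\}}(1-|\varphi_a(z_k)|^2)<\infty$ and hence a finite union of uniformly separated sequences. Your local-analysis method, effective for (i), does not naturally produce this global summability.
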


Theorem~\ref{Theorem:SchwarzForHigherOrder1}(i) should be compared to
the second order case \cite[Theorem~3]{S:1955}, which was already mentioned in the introduction.
For the counterpart of
Theorem~\ref{Theorem:SchwarzForHigherOrder1}(ii), see \cite[Theorem~1]{GNR:preprint}.
The proof of Theorem~\ref{Theorem:SchwarzForHigherOrder1} is presented in Section~\ref{sec:sproof},
and it is based on a~conformal transformation of \eqref{eq:de3}, Jensen's formula,
and on a sharp growth estimate for solutions of \eqref{eq:de3}.
Theorem~\ref{Theorem:SchwarzForHigherOrder1} extends to
the case of higher order differential equations~\eqref{eq:dek},
but we leave details for the interested reader.

The following results concern
slowly growing solutions of the second order differential equation \eqref{eq:de1}.
A sufficient condition for the analytic coefficient $A$, which forces all solutions 
of \eqref{eq:de1} to be bounded, is given in terms of Cauchy transforms.
The space $\mathcal{K}$ of Cauchy transforms consists of functions in $\mathcal{H}(\D)$ 
that take the form $\int_{\partial\D} (1-\overline{\z}z)^{-1} \, d\mu(\zeta)$,
where $\mu$ is a finite, complex, Borel measure on the unit circle $\partial\D$. For more details we refer to
Section~\ref{sc:bounded}, where the following theorem is proved.


\begin{theorem} \label{th:bounded}
Let $A\in\mathcal{H}(\D)$. If
$\limsup\limits_{r\to 1^-}\,\sup\limits_{z\in\D}\, \nm{A_{r,z}}_{\mathcal{K}}<1$ for
\begin{equation*}
  A_{r,z}(u)=\overline{\int_0^z\int_0^\zeta\frac{A(rw)}{1-\overline{u}w}\,dw\,d\zeta},\quad u\in\D,
\end{equation*}
then all solutions $f$ of~\eqref{eq:de1} are bounded.
\end{theorem}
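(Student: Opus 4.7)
The strategy is to show that the dilates $f_r(z)=f(rz)$ satisfy a uniform $H^\infty$-bound as $r\to 1^-$, which is equivalent to $f\in H^\infty$. First I would integrate $f''=-Af$ twice along the segment from $0$ to $rz$ and substitute $w\mapsto rw$, $\zeta\mapsto r\zeta$ to obtain the Volterra-type identity
\begin{equation*}
 f(rz)=f(0)+rf'(0)z-r^{2}\int_{0}^{z}\int_{0}^{\zeta}A(rw)f(rw)\,dw\,d\zeta,\qquad z\in\D.
\end{equation*}
Since $f_r$ is analytic on a neighborhood of $\overline{\D}$, Cauchy's reproducing formula gives $f(rw)=\int_{\partial\D}f(r\eta)(1-\overline{\eta}w)^{-1}\,d\sigma(\eta)$, with $d\sigma$ the normalized arc length on $\partial\D$.

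Plugging this representation into the double integral and switching the order of integration by Fubini, the inner iterated integral becomes precisely $\overline{A_{r,z}(\eta)}$, so
\begin{equation*}
 \int_{0}^{z}\int_{0}^{\zeta}A(rw)f(rw)\,dw\,d\zeta=\int_{\partial\D}f(r\eta)\,\overline{A_{r,z}(\eta)}\,d\sigma(\eta).
\end{equation*}
To dominate the right-hand side by $\nm{f_r}_{H^\infty}\,\nm{A_{r,z}}_{\mathcal{K}}$, I would take any representing measure $\mu$ for $A_{r,z}$ and invoke Parseval term-by-term, which is legitimate because $f_r$ and $A_{r,z}$ both extend analytically past $\partial\D$ when $r<1$ and $z\in\D$; this rewrites the integral as $\int_{\partial\D}f_r\,d\overline{\mu}$, bounded in modulus by $\nm{f_r}_{H^\infty}|\mu|(\partial\D)$, and an infimum over $\mu$ yields the claimed dominance. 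Combining these displays and taking the supremum over $z\in\D$ produces the self-improving inequality
\begin{equation*}
 \nm{f_r}_{H^\infty}\leq|f(0)|+|f'(0)|+r^{2}\,\nm{f_r}_{H^\infty}\,\sup_{z\in\D}\nm{A_{r,z}}_{\mathcal{K}}.
\end{equation*}

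By the $\limsup$ hypothesis one can fix $\rho<1$ and $r_{0}<1$ so that $\sup_{z\in\D}\nm{A_{r,z}}_{\mathcal{K}}\leq\rho$ for every $r\in(r_{0},1)$; the display then rearranges to $\nm{f_r}_{H^\infty}\leq(|f(0)|+|f'(0)|)/(1-\rho)$ on this range, and letting $r\to 1^-$ concludes that $f\in H^\infty$. The most delicate point I anticipate is the Cauchy-transform duality step: a generic $g\in\mathcal{K}$ need not admit pointwise boundary values, so one cannot naively pair $f_r$ against a boundary trace of $A_{r,z}$. Here the obstacle evaporates because the integrand $A(rw)/(1-\overline{u}w)$ is analytic in $u$ on $\{|u|<1/|z|\}$, so $A_{r,z}$ extends analytically across $\partial\D$ and the coefficient-wise pairing with the equally regular $f_r$ is unambiguous; once this is secured, the rest of the argument is a routine closure.
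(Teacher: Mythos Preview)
Your proposal is correct and follows essentially the same route as the paper: integrate \eqref{eq:de1} twice, apply the Cauchy reproducing formula to the dilate $f_r$, interchange integrals, and then pass from the boundary pairing $\int_{\partial\D} f_r\,\overline{A_{r,z}}\,d\sigma$ to $\int_{\partial\D} f_r\,d\overline{\mu}$ for a representing measure $\mu$ of $A_{r,z}$, before absorbing the resulting term into the left-hand side. The paper invokes the disc algebra/Cauchy-transform duality \cite[Theorem~4.2.2]{CMR:2006} and polar decomposition at the pairing step, whereas you justify the same identity by a direct Parseval computation exploiting the analytic extension of $A_{r,z}$ across $\partial\D$; both arguments yield the same estimate.
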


The question converse to Theorem~\ref{th:bounded} is open and
appears to be difficult.
The boundedness of one non-trivial solution of \eqref{eq:de1} is not enough to guarantee that \eqref{eq:neh}
is finite, which can be easily seen by considering the solution $f(z)=\exp(-(1+z)/(1-z))$ of \eqref{eq:de1}
for $A(z)=-4z/(1-z)^4$, $z\in\D$. However, if \eqref{eq:de1} admits linearly independent
solutions $f_1,f_2\in H^\infty$ such that $\inf_{z\in\D} \big( |f_1(z)|+|f_2(z)| \big) >0$,
then \eqref{eq:neh} is finite. This is a consequence of the Corona theorem~\cite[Theorem~12.1]{D:1970}, 
according to which there exist $g_1,g_2\in H^\infty$ such that $f_1g_1+f_2g_2\equiv 1$,
and consequently $A=A+(f_1g_1+f_2g_2)''=2(f_1'g_1'+f_2'g_2')+f_1g_1''+f_2g_2''$.

We proceed to consider $\BMOA$, which contains
those functions in the Hardy space $H^2$ whose boundary values are of bounded mean oscillation.
The following result should be compared to \cite[Theorem~2]{P:1982}
as $\BMOA$ is a~conformally invariant subspace of $H^2$.


\begin{theorem}
\label{th:bmoa2}
Let $A\in\mathcal{H}(\D)$. If
\begin{equation}
  \label{eq:bmoacondnew}
  \sup_{a\in\D}\,
  \left( \log \frac{e}{1-|a|}  \right)^2
  \int_{\D} |A(z)|^2(1-|z|^2)^2 (1-|\varphi_a(z)|^2)  \, dm(z)
\end{equation}
is sufficiently small, then all solutions $f$ of~\eqref{eq:de1} satisfy $f\in\BMOA$.
\end{theorem}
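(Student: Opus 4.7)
The plan is to follow the bootstrap philosophy of Pommerenke~\cite{P:1982} for $H^2$, adapted to $\BMOA$ via a~Littlewood-Paley-type norm equivalence involving $f''$ and a~logarithmic Carleson-type lemma. It suffices to derive an a priori inequality of the form $\|f\|_{\BMOA}^2 \leq C\brac{|f(0)|^2+|f'(0)|^2} + C'\mathcal{E}(A)\|f\|_{\BMOA}^2$, where $\mathcal{E}(A)$ denotes the quantity in \eqref{eq:bmoacondnew}. Working with the dilation $f_\rho(z) := f(\rho z)$, which lies in $H^\infty \subset \BMOA$ and solves the equation with coefficient $\rho^2 A(\rho\cdot)$ (whose associated quantity is controlled by $\mathcal{E}(A)$ up to an absolute constant), all norms appearing below can be assumed a priori finite; letting $\rho \to 1^-$ at the end gives the conclusion.

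Starting from the standard equivalence $\|f\|_{\BMOA}^2 \asymp |f(0)|^2 + \sup_{a}\int_\D |f'|^2(1-|\varphi_a|^2)\,dm$, I~would change variables $z=\varphi_a(w)$ so the integrand becomes $|(f\circ\varphi_a)'(w)|^2(1-|w|^2)$, and then apply the $H^2$ Littlewood-Paley identity to $g_a = f\circ\varphi_a - f(a)$. Pulling back via $w = \varphi_a(z)$ and using $\varphi_a'(\varphi_a(z))\varphi_a'(z) = 1$, the principal term becomes $\int_\D |f''(z)|^2(1-|z|^2)^2(1-|\varphi_a(z)|^2)\,dm(z)$, while the two auxiliary contributions, namely $|g_a'(0)|^2 = |f'(a)|^2(1-|a|^2)^2$ and the $\varphi_a''$-term, are absorbed into $\|f\|_{\BLOCH}^2 \lesssim \|f\|_{\BMOA}^2$. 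Substituting $f'' = -Af$ and the standard BMOA growth estimate $|f(z)| \lesssim |f(0)| + \|f\|_{\BMOA}\log\frac{e}{1-|z|}$ gives
\begin{equation*}
  \int_\D |f''|^2(1-|z|^2)^2(1-|\varphi_a|^2)\,dm \lesssim \brac{|f(0)|^2+\|f\|_{\BMOA}^2}\int_\D |A(z)|^2\log^2\!\brac{\tfrac{e}{1-|z|}}(1-|z|^2)^2(1-|\varphi_a|^2)\,dm(z).
\end{equation*}

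The technical heart of the argument is the auxiliary lemma asserting that, for every positive Borel measure $\nu$ on $\D$,
\begin{equation*}
  \sup_{a\in\D}\int_\D \log^2\!\brac{\tfrac{e}{1-|z|}}(1-|\varphi_a(z)|^2)\,d\nu(z) \leq C_0\, \sup_{a\in\D}\brac{\log\tfrac{e}{1-|a|}}^2 \int_\D (1-|\varphi_a(z)|^2)\,d\nu(z).
\end{equation*}
Applied to $d\nu = |A|^2(1-|z|^2)^2\,dm$, this replaces the displayed integral by $C_0\,\mathcal{E}(A)$. The lemma is proved by splitting the domain into $\set{|\varphi_a(z)| \leq 1/2}$, where $\log\frac{e}{1-|z|} \asymp \log\frac{e}{1-|a|}$ by Schwarz-Pick, and its complement, where the exponential decay of $1-|\varphi_a(z)|^2$ in the hyperbolic distance $d(a,z)$ dominates the at-most-linear growth of $\log\frac{e}{1-|z|} - \log\frac{e}{1-|a|}$ in $d(a,z)$.

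Combining these ingredients yields the desired a priori bound, and the smallness of $\mathcal{E}(A)$ allows the $\|f\|_{\BMOA}^2$ on the right to be absorbed. The main obstacle will be the logarithmic Carleson-type lemma together with careful bookkeeping of the $\varphi_a''$-contribution in the norm equivalence and of the Bloch-seminorm absorption, since all of these must be carried out with constants that remain sharp enough for the closure step.
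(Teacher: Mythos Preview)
Your overall bootstrap strategy matches the paper's, but the ``logarithmic Carleson-type lemma'' you propose as the technical heart is \emph{false} for general positive Borel measures, and your sketched proof of it does not work.

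Concretely, take the radial measure $d\nu(re^{i\theta}) = \big(\log\tfrac{e}{1-r}\big)^{-3}(1-r)^{-2}\,dr\,d\theta$. A direct computation using the Poisson kernel shows that for $a=t\in(0,1)$,
\begin{equation*}
\Big(\log\tfrac{e}{1-t}\Big)^{2}\int_\D(1-|\varphi_t(z)|^2)\,d\nu(z)\asymp 1,
\end{equation*}
so the right-hand side of your lemma is finite. But
\begin{equation*}
\int_\D\Big(\log\tfrac{e}{1-|z|}\Big)^{2}(1-|\varphi_t(z)|^2)\,d\nu(z)
\gtrsim\int_t^1\frac{dr}{(1-r)\log\tfrac{e}{1-r}}=\infty,
\end{equation*}
so the left-hand side is infinite. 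The flaw in your sketch is the ``complement'' step: the inequality $(\log\tfrac{e}{1-|z|}-\log\tfrac{e}{1-|a|})^2(1-|\varphi_a(z)|^2)\lesssim 1$ is correct, but after integration it only yields a bound by $\nu(\D)$, and $\nu(\D)$ is \emph{not} controlled by the right-hand side of your lemma (in the counterexample $\nu(\D)=\infty$).

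The paper circumvents this by \emph{not} using the global pointwise bound $|f(z)|\lesssim\nm{f}_{\BMOA}\log\tfrac{e}{1-|z|}$. Instead it writes $f(z)=(f(z)-f(a))+f(a)$ and treats the two pieces separately. The $f(a)$ piece contributes $|f(a)|^2\lesssim\nm{f}_{\BMOA}^2\big(\log\tfrac{e}{1-|a|}\big)^2$, which combines with the remaining integral to give exactly $\mathcal{E}(A)$. The oscillation piece $f(z)-f(a)$ is handled, after the change of variable $z=\varphi_a(w)$, by Carleson's embedding theorem for $H^2$: since $\nm{f\circ\varphi_a-f(a)}_{H^2}\le\nm{f}_{\BMOA}$ and the relevant measure is a Carleson measure with norm dominated by $\mathcal{E}(A)$ (the logarithm is not needed here), this piece is also bounded by $\nm{f}_{\BMOA}^2\,\mathcal{E}(A)$. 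This splitting is the key idea missing from your proposal; replacing it by a general measure-theoretic lemma cannot succeed. A secondary point: your claim that the dilated quantity $\mathcal{E}(\rho^2 A(\rho\cdot))$ is controlled by $\mathcal{E}(A)$ uniformly in $\rho$ is true but not routine; the paper devotes a separate argument (splitting according to whether $|a|<1/(2-\rho)$ or not, and invoking $A\in\L{1}$) to establish it.
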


To the best of our knowledge $\BMOA$ solutions of \eqref{eq:de1} have not been discussed in the literature before.
By \cite[Lemma~5.3]{PR:2014} or \cite[Theorem~1]{Z:2003}, 
\eqref{eq:bmoacondnew} is comparable to 
\begin{equation}
  \label{eq:bmoacond2}
  \sup_{a\in\D}\,
  \frac{\big( \log \frac{e}{1-|a|}\big)^2}{1-|a|}
  \int_{S_a} |A(z)|^2(1-|z|^2)^3 \, dm(z),
\end{equation}
where $S_a=\{re^{i\theta} : |a|<r<1,\,|\theta-\arg(a)|\leq (1-|a|)/2\}$ denotes the Carleson square 
with respect to $a\in\D\setminus\set{0}$ and $S_0=\D$. See also \cite[Lemma~3.4]{SZ:1999}.
Solutions in $\VMOA$, the closure of polynomials in $\BMOA$, are discussed in Section~\ref{Section:BMOA}
in which Theorem~\ref{th:bmoa2} is proved.

The case of the Bloch space $\mathcal{B}$ is especially interesting.
For $0< \alpha < \infty$, let $\L{\alpha}$ be the collection of those $A\in\mathcal{H}(\D)$ for which
\begin{equation*} \label{eq:hkrviiteold}
        \nm{A}_{\L{\alpha}} = \sup_{z\in\D}\, |A(z)| (1-|z|^2)^2 \left( \log\frac{e}{1-|z|} \right)^\alpha < \infty.
\end{equation*}
The comparison between $H^\infty_2$, $\L{\alpha}$ and the functions for which \eqref{eq:bmoacondnew}
is finite is presented in Section~\ref{sec:comparison}. It is known that, if $A\in \L{1}$
with sufficiently small norm, then all solutions of~\eqref{eq:de1} satisfy $f\in\mathcal{B}$.
This result was recently discovered with the best possible upper bound for $\nm{A}_{\L{1}}$
in \cite[Corollary~4(b) and Example~5(b)]{HKR:2016}. Actually, if $\nm{A}_{\L{1}}$ is sufficiently
small, then all solutions of \eqref{eq:de1} satisfy $f\in\mathcal{B} \cap H^2$ by \cite[Corollary~1]{P:1982}.
We point out that, if $A\in\L{\alpha}$ for any $1<\alpha<\infty$,
then all solutions of~\eqref{eq:de1} are bounded by~\cite[Theorem~G(a)]{HKR:2004}.
Solutions in the little Bloch space $\mathcal{B}_0$, the closure of polynomials in $\mathcal{B}$, 
are discussed in Section~\ref{Section:Bloch}, among other Bloch results.

The proof of Theorem~\ref{th:bounded}
is based on an application of the reproducing formula for $H^1$ functions, and it is natural to ask
whether this method extends to the cases of $\mathcal{B}$ and $\BMOA$.
In the case of $\mathcal{B}$, by using the reproducing formula for weighted Bergman spaces,
we prove a result (namely, Theorem~\ref{th:bloch-general}) offering a family of coefficient conditions,
which are given in terms of Bergman spaces with regular weights.
The case of $\BMOA$, by using the reproducing formula for $H^1$, is 
further considered in Section~\ref{sec:bmoa_alt}.

A careful reader observes that the results above are closely related to operator theory. 
Actually, if $f$ is a solution of \eqref{eq:de1}, then
\begin{equation} \label{eq:diffrep}
  f(z)=-\int_0^z\left(\int_0^\z f(w)A(w)\,dw\right) d\z+f'(0)z+f(0),\quad z\in\D.
\end{equation}
If we denote
\begin{equation*}
  S_A(f)(z)=\int_0^z\left(\int_0^\z f(w)A(w)\,dw\right) d\z, \quad z\in\D,
\end{equation*}
we obtain an integral operator, induced by the symbol $A\in\mathcal{H}(\D)$, that sends
$\mathcal{H}(\D)$ into itself. With this approach, the search of sufficient coefficient conditions 
boils down to finding sufficient conditions for the boundedness of $S_A$.
Therefore, it is not a surprise that many results on slowly growing solutions are inspired by study of 
the classical integral operator $$T_g(f)(z)=\int_0^z f(\z)g'(\z)\,d\z,$$
see \cite{AC:2001, AS:1997, CPPR:preprint, P:1977, SSV:preprint}.
The strength of the operator theoretic approach is demonstrated by proving that the coefficient conditions
arising from Theorem~\ref{th:bloch-general} are
essentially interchangeable with $A\in\L{1}$, see Theorem~\ref{th:interc}.

Deep duality relations are implicit in the proofs of Theorems~\ref{th:bounded},
\ref{th:bloch-general} and  \ref{th:bmoa}.
The dual of $H^1$ is isomorphic to $\rm{BMOA}$ with the Cauchy pairing by the Fefferman
duality relation \cite[Theorem~7.1]{G:2001}, the dual of the disc algebra is
isomorphic to the space of Cauchy transforms with the dual pairing
$\langle f, K\mu \rangle = \int f \, \overline{d\mu}$ \cite[Theorem~4.2.2]{CMR:2006},
and the dual of $A^1_\omega$ is isomorphic to the Bloch space with the dual pairing
$\langle f, g \rangle_{A^2_\omega} = \int_\D f \overline{g} \, \omega\, dm$ \cite[Corollary~7]{PR:2016}.

Finally, we turn to consider coefficient conditions which place solutions of \eqref{eq:de1} in the Hardy spaces.
Our results are inspired by an~open question, which is closely related to the Hardy-Stein-Spencer formula
\begin{equation}
  \label{eq:hssf}
  \nm{f}_{H^p}^p
  =|f(0)|^p
  +\frac{p^2}{2}\int_\D|f(z)|^{p-2}|f'(z)|^2\log\frac{1}{|z|}\,dm(z),
\end{equation}
that holds for $0<p<\infty$ and $f\in\mathcal{H}(\D)$.
For $p=2$,~\eqref{eq:hssf} is the well-known Littlewood-Paley identity, while
the general case follows from~\cite[Theorem~3.1]{H:1994} by integration.


\begin{question} \label{probnorm}
Let $0<p<\infty$. If $f\in\mathcal{H}(\D)$ then is it true that
\begin{equation} \label{eq:hpest}
\nm{f}^p_{H^p} \leq C(p) \int_\D |f(z)|^{p-2} |f''(z)|^2
(1-|z|^2)^3 \, dm(z) + |f(0)|^p + |f'(0)|^p,
\end{equation}
where $C(p)$ is a positive constant such that $C(p)
\to 0^+$ as $p\to 0^+$?
\end{question}

Affirmative answer to this question
would have an immediate application to differential equations, see Section~\ref{sec:applications}.
In the context of differential equations, it suffices to consider Question~\ref{probnorm}
under the additional assumptions that all zeros of $f$ are simple and $f''$ vanishes
at zeros of $f$. Question~\ref{probnorm} has a straightforward solution for a non-trivial class of functions
as it is shown in Section~\ref{sec:zerofree}.

Function $f\in\mathcal{H}(\D)$ is uniformly
locally univalent if there is a constant $0<\delta\leq 1$
such that $f$ is univalent in each pseudo-hyperbolic disc
$\Delta(z,\delta)=\set{w\in\D : |\varphi_z(w)|<\delta}$ for $z\in\D$.
A partial solution to Question~\ref{probnorm} is given by
Theorem~\ref{prop:main}. Here $a\lesssim b$ means that there exists $C>0$ such that $a\leq Cb$. 
Moreover, $a\asymp b$ if and only if $a\lesssim b$ and $a\gtrsim b$.


\begin{theorem} \label{prop:main}
Let $f\in\mathcal{H}(\D)$, and $k\in\N$.
\begin{enumerate}
\item[\rm (i)]
If $0<p\leq 2$, then
\begin{equation} \label{eq:i}
  \nm{f}^p_{H^p} \lesssim \int_{\D} |f(z)|^{p-2} |f^{(k)}(z)|^2 (1- |z|^2)^{2k-1} \, dm(z) + \sum_{j=0}^{k-1} |f^{(j)}(0)|^p.
\end{equation}

\item[\rm (ii)]
If $2\leq p < \infty$, then
\begin{equation} \label{eq:ii}
  \int_{\D} |f(z)|^{p-2} |f^{(k)}(z)|^2 (1- |z|^2)^{2k-1} \, dm(z) + \sum_{j=0}^{k-1} |f^{(j)}(0)|^p \lesssim \nm{f}^p_{H^p}.
\end{equation}

\item[\rm (iii)]
If $0< p < \infty$ and $f$ is uniformly locally univalent, then \eqref{eq:ii} holds.
\end{enumerate}
The comparison constants are independent of $f$; in~\emph{(i)} and~\emph{(ii)} they depend on~$p$,
and in~\emph{(iii)} it depends on $\delta$ and $p$.
\end{theorem}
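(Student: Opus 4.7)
My plan is to reduce everything to the Littlewood--Paley area function
$$S_k f(\zeta) = \left(\int_{\Gamma(\zeta)} |f^{(k)}(z)|^2 (1-|z|^2)^{2k-2}\,dm(z)\right)^{1/2},\qquad \zeta\in\partial\D,$$
where $\Gamma(\zeta)$ is a non-tangential approach region, together with the standard higher-order area function characterization
$$\nm{S_k f}_{L^p(\partial\D)}^p + \sum_{j=0}^{k-1}|f^{(j)}(0)|^p \asymp \nm{f}_{H^p}^p,\qquad 0<p<\infty,$$
obtained by iterating \eqref{eq:hssf}. Fubini's theorem converts the volume integral appearing in \eqref{eq:i}--\eqref{eq:ii} into the tent-space form
$$\int_\D |f(z)|^{p-2}|f^{(k)}(z)|^2(1-|z|^2)^{2k-1}\,dm(z) \asymp \int_{\partial\D}\int_{\Gamma(\zeta)}|f|^{p-2}|f^{(k)}|^2(1-|z|^2)^{2k-2}\,dm(z)\,|d\zeta|,$$
so the task reduces to comparing the inner integral with $(f^{*}(\zeta))^{p-2}S_k f(\zeta)^2$, where $f^{*}$ is the non-tangential maximal function.

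For part~(ii), with $p\geq 2$, the pointwise bound $|f(z)|\leq f^{*}(\zeta)$ on $\Gamma(\zeta)$ gives $|f|^{p-2}\leq (f^{*})^{p-2}$, so the tent integral is $\lesssim \int_{\partial\D}(f^{*})^{p-2}(S_k f)^2\,|d\zeta|$, and H\"older with exponents $p/(p-2)$ and $p/2$ closes the estimate via $\nm{f^{*}}_{L^p}^{p-2}\nm{S_k f}_{L^p}^2\lesssim \nm{f}_{H^p}^p$. For part~(i), with $0<p\leq 2$, the inequality $|f|^{p-2}\geq (f^{*})^{p-2}$ reverses the direction and lower-bounds the tent integral by $\int_{\partial\D}(f^{*})^{p-2}(S_k f)^2\,|d\zeta|$. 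H\"older with exponents $2/p$ and $2/(2-p)$ then gives
$$\nm{S_k f}_{L^p}^p \leq \left(\int_{\partial\D}(f^{*})^{p-2}(S_k f)^2\,|d\zeta|\right)^{p/2}\nm{f^{*}}_{L^p}^{p(2-p)/2},$$
and replacing $f$ by the dilate $f_r(z)=f(rz)$ to guarantee finiteness of all quantities, dividing by $\nm{f^{*}}_{L^p}^{p(2-p)/2}\asymp\nm{f}_{H^p}^{p(2-p)/2}$, and passing to the limit $r\to 1^-$ yields~(i).

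Part~(iii) does not follow from the above scheme because the reverse inequality for $0<p<2$ is not accessible in this way. Instead, I~would exploit the uniform local univalence of~$f$ by transferring Koebe's distortion theorem from~$\D$ to the pseudo-hyperbolic disc $\Delta(z,\delta)$ via $\varphi_z$, and combining it with a Cauchy integral representation of $f^{(k)}$ over a sub-disc of Euclidean radius comparable to $\delta(1-|z|^2)$ centred at~$z$. This yields a higher-order Schwarz--Pick type inequality
$$|f^{(k)}(z)|(1-|z|^2)^{k-1}\lesssim |f'(z)|,\qquad z\in\D,$$
with a constant depending only on~$\delta$ and~$k$. Plugging this pointwise bound into the weighted integral collapses the $k$-th derivative weight onto the first derivative, and the Hardy--Stein--Spencer identity~\eqref{eq:hssf} finishes the estimate. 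The terms $\sum_{j<k}|f^{(j)}(0)|^p\lesssim \nm{f}_{H^p}^p$ are handled by the Cauchy inequalities.

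The principal obstacle I~anticipate is a clean formulation of the higher-order area function characterization with the specific weight $(1-|z|^2)^{2k-1}$ and the correct correction $\sum_{j<k}|f^{(j)}(0)|^p$; once this is granted, the Fubini/H\"older argument is essentially mechanical. The Koebe-type distortion estimate used in~(iii) is classical, but ensuring that its constant depends only on~$\delta$ and~$k$ requires care when transferring between~$\D$ and~$\Delta(z,\delta)$.
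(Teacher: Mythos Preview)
Your proposal is correct and matches the paper's proof almost exactly: for (i) and (ii) the paper uses precisely the area-function characterization $\|f\|_{H^p}^p\asymp\|S_k f\|_{L^p}^p+\sum_{j<k}|f^{(j)}(0)|^p$, Fubini to pass to the tent form, the pointwise comparison $|f(z)|\lessgtr f^\star(\zeta)$ on $\Gamma(\zeta)$, H\"older with the same exponents you indicate, and the dilation $f_r$ in (i). For (iii) the paper obtains the same pointwise bound $|f^{(k)}(z)|(1-|z|^2)^{k-1}\lesssim|f'(z)|$ but reaches it by quoting Yamashita's theorem that $\sup_{z\in\D}|f''(z)/f'(z)|(1-|z|^2)<\infty$ for uniformly locally univalent $f$ and then inducting via $(f^{(k)}/f')'=f^{(k+1)}/f'-(f''/f')(f^{(k)}/f')$, rather than your direct Koebe-distortion plus Cauchy-estimate route; the two derivations are equivalent in content and difficulty.
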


The proof of Theorem~\ref{prop:main} is presented in Section~\ref{sec:hardy}, and
it takes advantage of a norm in~$H^p$, given in terms of higher derivatives
and area functions, and the boundedness of the non-tangential maximal function.


\section{Zero distribution of solutions} \label{sec:sproof}

For $0\leq p < \infty$, the growth space $H^\infty_p$ consists of those $g\in \mathcal{H}(\D)$ for which
\begin{equation*}
  \nm{g}_{H^\infty_p} = \sup_{z\in\D} \,|g(z)| (1-|z|^2)^p < \infty.
\end{equation*}
We write $H^\infty = H^\infty_0$, for short. The sequence $\{z_n\}_{n=1}^\infty\subset \D$
is called uniformly separated~if
\begin{equation*}
  \inf_{k\in\N} \, \prod_{n\in\N\setminus\{k\}} \left| \frac{z_n - z_k}{1-\overline{z}_n z_k} \right|>0,
\end{equation*}
while $\{z_n\}_{n=1}^\infty\subset \D$ is said to be separated in the hyperbolic metric if
there exists a~constant $\delta>0$ such that
$|z_n-z_k|/|1-\overline{z}_n z_k|>\delta$ for any $n\neq k$.
After the proof of Theorem~\ref{Theorem:SchwarzForHigherOrder1}, we present an auxiliary result
which provides an estimate for the number of sequences in the finite union
appearing in the claim.


\begin{proof}[Proof of Theorem~\ref{Theorem:SchwarzForHigherOrder1}]
(i) If $f$ is a non-trivial solution of \eqref{eq:de3},  then $g=f\circ\vp_a$ solves
\begin{equation} \label{eq:de3g}
  g'''+B_2g''+B_1g'+B_0g=0,
\end{equation}
where
\begin{equation} \label{eq:coeffB}
  \begin{split}
    B_0 & =(A_0 \circ \vp_a)(\vp_a')^3,\qquad B_2=(A_2 \circ \vp_a)\vp_a'-3\, \frac{\vp_a''}{\vp_a'},\\
    B_1 &=(A_1 \circ \vp_a)(\vp_a')^2-(A_2 \circ \vp_a)\vp_a''+ 3 \left(\frac{\vp_a''}{\vp_a'}\right)^2 - \frac{\vp_a'''}{\vp_a'}.
  \end{split}
\end{equation}
By a conformal change of variable, we deduce $\|B_0\|_{H^\infty_3} =  \nm{A_0}_{H^\infty_3}$,
\begin{align*}
  \|B_2\|_{H^\infty_1} & \le \sup_{z\in\D} \, |A_2(z)| \, (1-|z|^2) 
                         + \sup_{z\in\D} \, \frac{6|a|}{|1-\overline{a}z|}\, (1-|z|^2)
                         \leq  \|A_2\|_{H^\infty_1}+12,\\
  \|B_1\|_{H^\infty_2}
                       & \leq \sup_{z\in\D} \, |A_1(z)| \, (1-|z|^2)^2 
                         + \sup_{w\in\D} \, |A_2(w)| \, (1-|w|^2)
                         \left| \frac{\vp_a''(\vp_a(w))}{\vp_a'(\vp_a(w))} \right|(1-|\vp_a(w)|^2)\\
                       & \qquad + \sup_{z\in\D} \, \frac{12|a|^2}{|1-\overline{a}z|^2} \, (1-|z|^2)^2
                         + \sup_{z\in\D} \, \frac{6|a|^2}{|1-\overline{a}z|^2} \,  (1-|z|^2)^2\\
                       & \le\|A_1\|_{H^\infty_2}+4\|A_2\|_{H^\infty_1}+72.
\end{align*}

Let $\mathcal{Z}= \mathcal{Z}(f)$ be the sequence of two-fold zeros of $f$, and
let $a\in\mathcal{Z}$; we may assume that $\mathcal{Z}$ is not empty, for otherwise 
there is nothing to prove. Then, the zero of
$g=f\circ\vp_a$ at the origin is two-fold. By applying Jensen's
formula to $z \mapsto g(z)/z^2$ we obtain
\begin{equation} \label{eq:jensenpreint}
  \sum_{\substack{z_k\in \mathcal{Z} \\ 0<|\vp_a(z_k)|<r}} \log\frac{r}{|\vp_a(z_k)|} \le\frac1{2\pi}\int_0^{2\pi}
  \log^+\left|\frac{g(re^{i\t})}{g''(0)}\right|\,d\t+\log\frac{2}{r^{2}},
  \quad 0<r<1,
\end{equation}
where $\log^+ x = \max\{ 0, \log x \}$ for $0\leq x < \infty$. Since
\begin{align*}
  \int_0^1 \Bigg( \sum_{\substack{z_k\in \mathcal{Z} \\ 0<|\vp_a(z_k)|<r}} \log\frac{r}{|\vp_a(z_k)|} \Bigg) r \, dr
& = \sum_{z_k\in\mathcal{Z}\setminus\{a\}} \int_{|\vp_a(z_k)|}^1 r \log\frac{r}{|\vp_a(z_k)|}\, dr\\
& \geq \frac{1}{8} \sum_{z_k\in\mathcal{Z}\setminus\{a\}} \left( 1 - |\vp_a(z_k)|^2 \right)^2,
\end{align*}
the estimate \eqref{eq:jensenpreint} implies
\begin{equation*}
  \sum_{z_k\in\mathcal{Z}\setminus\{a\}} \left( 1 - |\vp_a(z_k)|^2 \right)^2
  \leq 4 \int_\D\log^+\left|\frac{g(z)}{g''(0)}\right|\,dm(z)+  4\log 2+4.
\end{equation*}

Consider the normalized solution $h(z) = g(z)/g''(0)$ of \eqref{eq:de3g},
which has the initial values $h(0)=h'(0) = 0$ and $h''(0) = 1$.
By the proofs of the growth estimates \cite[Theorems~3.1 and~4.1, and Corollary~4.2]{HKR:2004}, there exists
a constant $C_1>0$ (depending only on the order of the differential equation) such that
\begin{equation*} 
  \frac{1}{2\pi} \int_0^{2\pi} \log^+ \big| h(re^{i\theta}) \big| \, d\theta \le
  C_1\, \sum_{j=0}^{2}\sum_{n=0}^j\int_0^{2\pi}\!\!\int_0^r|B_j^{(n)}(se^{i\t})|(1-s)^{3-j+n-1}\,ds\, d\t.
\end{equation*}

By Cauchy's integral formula and the estimates above, 
there exists a positive constant $C_2 = C_2(\nm{A_0}_{H^\infty_3}, \nm{A_1}_{H^\infty_2}, \nm{A_2}_{H^\infty_1})$ such that
\begin{equation*}
  \bnm{B_j^{(n)}}_{H^\infty_{3-j+n}} \leq C_2,
  \quad j=0,1,2, \quad n=0,\dotsc,j.
\end{equation*}
Let $M_\infty\big(s, B_j^{(n)}\big)$ denote the maximum modulus of $B_j^{(n)}$ on the circle of radius $s$. Now
\begin{equation*} \label{eq:supsup}
  \begin{split}
    & \sup_{a\in\mathcal{Z}} \, \sum_{z_k\in\mathcal{Z}\setminus\{a\}} \!\left( 1 - |\vp_a(z_k)|^2 \right)^2\\
    & \qquad \leq 4 \log 2 + 4 + 16\pi\,  C_1 \, \sup_{a\in\mathcal{Z}} \,\sum_{j=0}^2
    \sum_{n=0}^j \int_0^1 \!\int_0^r M_\infty\big(s, B_j^{(n)}\big)(1-s)^{2-j+n} \, ds\, dr \\
    & \qquad \leq 4 \log 2 + 4 + 16\pi \, C_1 C_2 \,
    \sum_{j=0}^2 \sum_{n=0}^j \int_0^1 \!\int_0^r  \frac{ds}{1-s^2} \, dr < \infty.
  \end{split}
\end{equation*}
The assertion follows from Lemma~\ref{lemma:finmeasure2}(i) below.

(ii) As in the proof of (i), we conclude that $g = f \circ \vp_a$ is a solution of \eqref{eq:de3g},
where the coefficients $B_0,B_1,B_2$ depend on $a\in\D$. By taking advantage of \eqref{eq:assumpp}, 
\begin{equation*} 
  \sup_{a\in\D} \, \int_\D |B_j^{(n)}(z)|(1-|z|^2)^{2-j+n}\, dm(z) < \infty, \quad j=0,\dotsc, 2, \quad n=0,\dotsc, j.
\end{equation*}
First, get rid of the derivatives by standard estimates, and second,
integrate the coefficients \eqref{eq:coeffB} term-by-term.

Let $\mathcal{Z}$ be the sequence of two-fold zeros of $f$. As above, there exists a constant 
$C_3>0$ (depending only on the order of the differential equation) such that
\begin{equation*} 
  \sup_{a\in\mathcal{Z}} \sum_{\substack{z_k\in \mathcal{Z} \\ 0<|\vp_a(z_k)|<r}} \log\frac{r}{|\vp_a(z_k)|}
  \leq \log\frac{2}{r^{2}} +  C_3\, \sup_{a\in\mathcal{Z}} \, \sum_{j=0}^{2}\sum_{n=0}^j\int_\D |B_j^{(n)}(z)|(1-|z|^2)^{2-j+n}\, dm(z)
\end{equation*}
for $0<r<1$. By letting $r\to 1^{-}$, we obtain
\begin{equation*}
  \sup_{a\in\mathcal{Z}} \, \sum_{z_k\in\mathcal{Z} \setminus\{a\}} \!\left( 1 - |\vp_a(z_k)|^2 \right) < \infty.
\end{equation*}
This implies the assertion by Lemma~\ref{lemma:finmeasure2}(ii) below.
\end{proof}

The following lemma gives a concrete upper bound for the number of
sequences in the finite union appearing in the statement of Theorem~\ref{Theorem:SchwarzForHigherOrder1}.


\begin{lemma} \label{lemma:finmeasure2}
Let $\mathcal{Z} = \set{z_k}$ be a sequence of points in $\D$ such that the multiplicity 
of each point is at most $p\in\N$.
\begin{enumerate}

\item[\rm (i)]
  If
  \begin{equation} \label{eq:lemmaesti}
    \sup_{a\in\mathcal{Z}} \, \sum_{z_k\in\mathcal{Z}\setminus\{a\}} \big( 1 - |\vp_{a}(z_k)|^2 \big)^2 \leq M < \infty,
  \end{equation}
  then $\set{z_k}$ can be expressed as a finite union of at most $M+p$ separated sequences.

\item[\rm (ii)]
  If \begin{equation} \label{eq:lemmaestii}
    \sup_{a\in\mathcal{Z}} \, \sum_{z_k\in\mathcal{Z}\setminus\{a\}} \big( 1 - |\vp_{a}(z_k)|^2 \big) \leq M < \infty,
  \end{equation}
  then $\set{z_k}$ can be expressed as a finite union of at most $M+p$ uniformly separated sequences.
\end{enumerate}
\end{lemma}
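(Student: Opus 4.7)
The plan is to decompose $\mathcal{Z}$ via a greedy graph-coloring argument based on the pseudo-hyperbolic distance. For a sufficiently small $\delta\in(0,1)$, I consider the graph $G_\delta$ whose vertex set is the sequence $\mathcal{Z}$, listed with multiplicity, and whose edges join pairs $(z_j,z_k)$ with $|\vp_{z_j}(z_k)|<\delta$. Every proper coloring of $G_\delta$ then partitions $\mathcal{Z}$ into $\delta$-separated subsequences, which yields the structural decomposition required in both (i) and (ii).

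The key step is to bound the maximum degree of $G_\delta$. Fix $a\in\mathcal{Z}$ of multiplicity $p_a\leq p$ and split its neighbors in $G_\delta$ into the $p_a-1$ other copies of the point $a$, which contribute $1$ apiece to the sum appearing in \eqref{eq:lemmaesti}/\eqref{eq:lemmaestii} (since $\vp_a(a)=0$), and the remaining $N_a$ neighbors $z_k\neq a$, each of which contributes at least $(1-\delta^2)^r$, where $r=2$ in case (i) and $r=1$ in case (ii). The hypothesis then reads
\[
  (p_a-1)+N_a(1-\delta^2)^r\leq M,
\]
so the total degree $(p_a-1)+N_a$ is at most $M/(1-\delta^2)^r$. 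Choosing $\delta>0$ so that $(1-\delta^2)^r>M/(M+p)$ forces the maximum degree to be at most $M+p-1$, and greedy coloring exhibits $\mathcal{Z}$ as the union of at most $M+p$ $\delta$-separated subsequences. This settles (i).

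For (ii) I would first carry out the above decomposition with $r=1$ to obtain $M+p$ subsequences that are separated in the hyperbolic metric. Each color class $\mathcal{Z}'$ inherits the sum bound $\sup_{a\in\mathcal{Z}'}\sum_{z_k\in\mathcal{Z}'\setminus\{a\}}(1-|\vp_a(z_k)|^2)\leq M$ from \eqref{eq:lemmaestii}. To upgrade from separated to uniformly separated, I would invoke the elementary inequality $\log(1/x)\leq C_\delta(1-x)$ valid on $[\delta,1]$, together with the trivial bound $1-|\vp_a(z_k)|\leq 1-|\vp_a(z_k)|^2$, to obtain
\[
  \sum_{z_k\in\mathcal{Z}'\setminus\{a\}}\log\frac{1}{|\vp_a(z_k)|}
  \leq C_\delta\!\!\sum_{z_k\in\mathcal{Z}'\setminus\{a\}}\!\!\bigl(1-|\vp_a(z_k)|^2\bigr)\leq C_\delta M
\]
uniformly in $a\in\mathcal{Z}'$, which rearranges to the Blaschke-product lower bound $\prod_{z_k\in\mathcal{Z}'\setminus\{a\}}|\vp_a(z_k)|\geq e^{-C_\delta M}>0$ defining uniform separation.

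The main potential obstacle is the bookkeeping associated with multiplicities: the copies of a single point of multiplicity $p$ form a clique of size $p$ in $G_\delta$, which already forces the chromatic number to be at least $p$ even when $M$ is very small. The computation above shows that the sum condition simultaneously controls the non-twin neighbors well enough that the twin contribution is actually beneficial to the degree estimate, and that adding $p$ (rather than some quantity depending on $\delta$) to $M$ suffices for the final count.
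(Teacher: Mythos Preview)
Your proof is correct and follows essentially the same strategy as the paper's: both arguments bound the number of points of $\mathcal{Z}$ lying in a small pseudo-hyperbolic disc about each $a\in\mathcal{Z}$ via the sum hypothesis, and convert this local degree bound into a decomposition into separated pieces, with the upgrade to uniform separation in~(ii) handled identically by controlling $\sum\log(1/|\vp_a(z_k)|)$ on each separated piece. The paper packages the coloring step as a contradiction (citing Duren--Schuster for existence of \emph{some} finite decomposition, then assuming more than $M+p$ pieces are needed and letting the disc radius $2^{-n}\to 0$), whereas your greedy-coloring formulation with a fixed small $\delta$ is more direct and self-contained.
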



\begin{proof}
(i) By the proofs of \cite[Theorem~15 and Lemma~16; pp.~69--71]{DS:2004},
\eqref{eq:lemmaesti} implies that $\mathcal{Z}$ is a finite
union of separated sequences; in \eqref{eq:lemmaesti} it suffices to take the supremum with
respect to $\mathcal{Z}$ instead of $\D$. Assume on contrary to the claim, that every partition of
$\mathcal{Z}$ into subsequences is  a~finite union
of at least $M+p+1$ separated sequences. Then, for each $n\in\N$ there exists a~point $z_{k_n}\in\mathcal{Z}$
such that the number of points
\begin{equation*}
  \#\big\{z_k \in\mathcal{Z}: |\vp_{z_k}(z_{k_n})|\leq 2^{-n}\big\}\geq M+p+1.
\end{equation*}
Now
\begin{align*}
  p + M  & \geq p \, + \sum_{z_k \in \mathcal{Z} \setminus\{z_{k_n}\}} \big( 1 - |\vp_{z_k}(z_{k_n})|^2 \big)^2
           \geq \sum_{z_k \in \mathcal{Z}} \big( 1 - |\vp_{z_k}(z_{k_n})|^2 \big)^2\\
         & \geq \# \big\{z_k \in\mathcal{Z}: |\vp_{z_k}(z_{k_n})|\leq 2^{-n}\big\}\cdot  (1-4^{-n})^2
           \geq (M+p+1) (1-4^{-n})^2.
\end{align*}
By letting $n\to\infty$ we arrive to a contradiction.
Hence $\mathcal{Z}$ can be expressed as a finite union of at most $M+p$ separated sequences.

(ii) It is well-known that, if \eqref{eq:lemmaestii} holds then $\mathcal{Z}$ is a finite
union of uniformly separated sequences (again, it suffices to take the supremum with
respect to $\mathcal{Z}$ instead of $\D$). The finite union contains at most $M+p$ 
separated sequences by an argument similar to that above, and each of these separated
sequences is uniformly separated by \eqref{eq:lemmaestii}.
\end{proof}


\begin{example}
If $\{f,g\}$ is a solution base of \eqref{eq:de1}, then $\{f^2,g^2,fg\}$ is a solution base of
\begin{equation} \label{eq:de3ex}
h'''+4A h' + 2A' h = 0.
\end{equation}
Let us apply this property to a classical example \cite[p.~162]{S:1955}
originally due to Hille \cite[p.~552]{H:1949}.
For $\gamma>0$, the differential equation \eqref{eq:de1} with $A(z)=(1+4\gamma^2)/(1-z^2)^2$, $z\in\D$,
admits the solution
\begin{equation*} 
f(z) = \sqrt{1-z^2} \, \sin \left( \gamma \log\frac{1+z}{1-z} \right), \quad z\in\D.
\end{equation*}
The zeros of $f$ are simple and real, and moreover, the hyperbolic distance between two 
consecutive zeros is precisely $\pi/(2\gamma)$. Consequently, \eqref{eq:de3ex} admits the solution $h=f^2$ whose zero-sequence
is a union of two separated sequences. In fact, this sequence is a~union of two uniformly
separated sequences, since all zeros are real \cite[Theorem~9.2]{D:1970}. 
In this case the coefficients of \eqref{eq:de3ex} satisfy both conditions \eqref{eq:assumpp0}
and \eqref{eq:assumpp}.\hfill $\diamond$
\end{example}


\section{Comparison of the coefficient conditions} \label{sec:comparison}

The following result provides us with a comparison of the coefficient conditions.
The reader is invited to compare our findings to those in \cite[Section~5]{CGP:2014}. 
If $A\in\mathcal{H}(\D)$ and 
\begin{equation}
  \label{eq:hpold}
  \sup_{a\in\D}\,
  \int_{\D} |A(z)|^2(1-|z|^2)^2 (1-|\varphi_a(z)|^2) \, dm(z)
\end{equation}
is finite, then we write $A\in\BMOA''$. Note that $A\in \BMOA''$ if and only if
there exists a~function $g=g(A)\in \BMOA$ such that $A=g''$. Correspondingly, if $A\in\mathcal{H}(\D)$ and 
\begin{equation*}
  \nm{A}_{\LMOA''}^2 = \sup_{a\in\D}\,
  \left( \log \frac{e}{1-|a|}  \right)^2
  \int_{\D} |A(z)|^2(1-|z|^2)^2 (1-|\varphi_a(z)|^2)  \, dm(z)<\infty,
\end{equation*}
then $A\in \LMOA''$. As expected, $\LMOA''$ consists of those functions in $\mathcal{H}(\D)$ which can be represented
as the second derivative of a function in $\LMOA$. For more details on $\LMOA$, see \cite{CGP:2014,SZ:1999}.
Finally, part (iv) of Lemma~\ref{lemma:comparison} gives a~sufficient condition for a lacunary series to be in $\LMOA''$.


\begin{lemma} \label{lemma:comparison}
The following assertions hold:
\begin{itemize}
\item[\rm (i)]
  $\L{\alpha_1} \subsetneq \L{\alpha_2} \subsetneq H^\infty_2$ for any 
  $0<\alpha_2<\alpha_1<\infty$;

\item[\rm (ii)]
  ${\LMOA''} \subsetneq \L{1} \subsetneq \L{\alpha} \subsetneq {\BMOA''} \subsetneq H^\infty_2$ 
  for any $1/2<\alpha<1$;

\item[\rm (iii)]
  $ \L{3/2} \subsetneq \LMOA''$, and
  $\LMOA'' \setminus \bigcup_{1<\alpha<\infty} \L{\alpha}$ is non-empty;

\item[\rm (iv)]
  if $\{n_k\}_{k=1}^\infty\subset \N$ and $\{a_k\}_{k=1}^\infty\subset \C$ 
  satisfy the conditions $\inf_{k\in\N} \, n_{k+1}/n_k>1$ and $\sum_{k=1}^\infty |a_k|^2 (\log n_k)^3/n_k^4<\infty$, 
  then $\big( \sum_{k=1}^\infty a_k z^{n_k} \big) \in\LMOA''$.
\end{itemize}
\end{lemma}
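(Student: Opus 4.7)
The plan is to handle (i) by inspection; (ii) and (iii) by combining a sub-mean-value pointwise estimate with the standard kernel asymptotic
\[
\int_{\D}\frac{\bigl(\log\tfrac{e}{1-|z|}\bigr)^{-s}}{(1-|z|^2)\,|1-\overline{a}z|^2}\,dm(z)\asymp\frac{\bigl(\log\tfrac{e}{1-|a|}\bigr)^{1-s}}{1-|a|^2}\quad (s>1);
\]
and (iv) by the Carleson-box reformulation \eqref{eq:bmoacond2} combined with the near-orthogonality of lacunary monomials on tent-shaped regions.

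Part (i) is immediate from monotonicity in $\a$ (since $\log\tfrac{e}{1-|z|}\ge 1$); strictness is witnessed by $A(z)=(1-z)^{-2}\bigl(\log\tfrac{e}{1-z}\bigr)^{-\a_2}\in\L{\a_2}\setminus\L{\a_1}$ and by $A(z)=(1-z)^{-2}\in H^\infty_2\setminus\bigcup_{\a>0}\L{\a}$. For part (ii), the chain $\LMOA''\subset\L{1}\subset\L{\a}\subset\BMOA''\subset H^\infty_2$ is proved in order: for $A\in\LMOA''$ I restrict the defining integral to the pseudo-hyperbolic disc $\Delta(a,1/2)$, where $1-|\vp_a(z)|^2\asymp 1$ and $1-|z|^2\asymp 1-|a|^2$, and apply the sub-mean-value inequality to $|A|^2$ to get $|A(a)|^{2}(1-|a|^{2})^{4}\bigl(\log\tfrac{e}{1-|a|}\bigr)^2\lesssim\nm{A}_{\LMOA''}^2$; the middle step is (i); the inclusion $\L{\a}\subset\BMOA''$ for $\a>1/2$ follows by substituting $1-|\vp_a(z)|^2=(1-|a|^2)(1-|z|^2)/|1-\overline{a}z|^2$ into \eqref{eq:hpold} and applying the kernel asymptotic with $s=2\a$, where the exponent $1-2\a\le 0$ on the final $\log$ yields uniform boundedness in $a$; and $\BMOA''\subset H^\infty_2$ is the well-known $\BMOA\subset\BLOCH$ lifted by one differentiation. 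Strictness at each step is witnessed by lacunary test series $\sum_k a_k z^{n_k}$ with $n_k=2^k$, together with the lacunary characterizations of the spaces (Bloch-versus-BMOA examples like $\sum z^{2^k}/\sqrt{k}$ taking care of $\BMOA''\subsetneq H^\infty_2$).

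For part (iii), $\L{3/2}\subset\LMOA''$ reuses the same kernel asymptotic with $\a=3/2$: the extra $(\log\tfrac{e}{1-|a|})^2$ factor in the $\LMOA''$ norm is precisely cancelled by $(\log\tfrac{e}{1-|a|})^{1-2\cdot 3/2}=(\log\tfrac{e}{1-|a|})^{-2}$. For the non-emptiness of $\LMOA''\setminus\bigcup_{\a>1}\L{\a}$ I would take $A(z)=(1-z)^{-2}\bigl(\log\tfrac{e}{1-z}\bigr)^{-1}$. Along the real axis one computes $|A(r)|(1-r^2)^2\bigl(\log\tfrac{e}{1-r}\bigr)^\a\asymp\bigl(\log\tfrac{e}{1-r}\bigr)^{\a-1}\to\infty$ for every $\a>1$, hence $A\notin\L{\a}$. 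To see $A\in\LMOA''$, I would argue directly: the integral localises on $\Delta(a,1/2)$, and one must use the angular decay $|A(z)|\asymp|1-z|^{-2}|\log(e/(1-z))|^{-1}$ (not merely the radial bound $(1-|z|)^{-2}(\log)^{-1}$) to get $\int_{\D}|A|^2(1-|z|^2)^2(1-|\vp_a|^2)\,dm\asymp(\log\tfrac{e}{1-|a|})^{-2}$, which is exactly the decay needed.

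Part (iv) follows from the Carleson-box form \eqref{eq:bmoacond2}: for lacunary $A=\sum a_k z^{n_k}$ an angular Fourier argument (using Hadamard gaps to dominate the cross terms by the diagonal) reduces $(1-|a|)^{-1}\int_{S_a}|A|^2(1-|z|^2)^3\,dm$ to a block sum, and bookkeeping of which block dominates at each scale of $a$ gives the sufficient condition $\sum_k|a_k|^2(\log n_k)^{3}/n_k^{4}<\infty$. The main obstacle is the angular-decay refinement in (iii): the naive pointwise bound arising from the kernel asymptotic is too weak to place the candidate $A$ in $\LMOA''$, and one must exploit the full complex structure of the singularity at $z=1$ to see that the $\LMOA''$ integral genuinely decays like $(\log\tfrac{e}{1-|a|})^{-2}$.
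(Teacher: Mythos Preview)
Your treatment of the inclusions in (i)--(iii) and of the example $A(z)=(1-z)^{-2}\bigl(\log\tfrac{e}{1-z}\bigr)^{-1}$ matches the paper's line closely (the paper phrases the $\LMOA''$ verification via the Carleson-box form \eqref{eq:bmoacond2} and the lower bound $|\log\tfrac{e}{1-z}|\ge\log\tfrac{e}{2(1-a)}$ on $S_a$, which is exactly the ``angular decay'' you flag). Where your proposal diverges is in two places.

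\textbf{Strictness of $\LMOA''\subsetneq\L{1}$.} You assert that lacunary series witness strictness at every step of (ii), but you give no candidate for this particular inclusion, and there is no off-the-shelf lacunary characterization of $\L{1}$ or $\LMOA''$ to appeal to. The paper does \emph{not} exhibit a single function here; instead it argues by contradiction: assuming $\LMOA''=\L{1}$, it invokes the jointly-maximal-products construction of \cite{GPR:2014} to obtain $A_0,A_1\in\L{1}$ with $|A_0|+|A_1|\asymp(1-|z|^2)^{-2}\bigl(\log\tfrac{e}{1-|z|}\bigr)^{-1}$, and then shows that the lower bound forces $\int_{S_a}(1-|z|)^{-1}\bigl(\log\tfrac{e}{1-|z|}\bigr)^{-2}\,dm(z)\lesssim(1-|a|)\bigl(\log\tfrac{e}{1-|a|}\bigr)^{-2}$, which contradicts the direct computation $\asymp(1-|a|)\bigl(\log\tfrac{e}{1-|a|}\bigr)^{-1}$. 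This is a genuinely different idea, and your proposal as written has a gap here: a lacunary witness may or may not exist, but you have not produced one, and the tools you cite do not supply it. (For $\L{\a}\subsetneq\BMOA''$ the paper simply takes $A(z)=(1-z)^{-2}$, which is quicker than any lacunary construction.)

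\textbf{Part (iv).} Your ``angular Fourier / block'' outline is plausible but the paper's argument is both different and shorter: it uses Cauchy--Schwarz against the auxiliary lacunary function $h(z)=\sum_k z^{n_k}\in\B$, obtaining $M_\infty(r,A)\lesssim\bigl(\sum_k|a_k|^2 r^{n_k}\bigr)^{1/2}\bigl(\log\tfrac{e}{1-r}\bigr)^{1/2}$, and then bounds the Carleson-box quantity by $\int_0^1 M_\infty(r,A)^2(1-r)^3\bigl(\log\tfrac{e}{1-r}\bigr)^2\,dr$, which evaluates termwise via \cite[Lemma~1.3]{PR:2014} to $\sum_k|a_k|^2(\log n_k)^3/n_k^4$. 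This avoids any angular analysis on $S_a$ entirely and is worth adopting.
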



\begin{proof}
As (i) is an immediate consequence of the definitions, we proceed to prove (ii).
Let $A\in\LMOA''$. By \eqref{eq:bmoacond2} and  the subharmonicity of $|A|^2$,
we deduce $\nm{A}_{\L{1}}^2 \lesssim  \nm{A}_{\LMOA''}^2$.
Assume on contrary to the assertion
that $\LMOA''=\L{1}$. By \cite[Theorem~1]{GPR:2014}, there exist
$A_0,A_1\in \mathcal{H}(\D)$ satisfying
\begin{equation*}
  |A_0(z)| + |A_1(z)| \asymp \frac{1}{( 1-|z|^2)^2 \log\frac{e}{1-|z|}},
  \quad z\in\D.
\end{equation*}
Since $A_0,A_1\in \LMOA''$, we deduce
\begin{align*}
  \int_{S_a} \frac{dm(z)}{(1-|z|^2)\big( \log\frac{e}{1-|z|} \big)^2}
  \lesssim \int_{S_a} \big( |A_0(z)| + |A_1(z)| \big)^2 (1-|z|^2)^3 \, dm(z)
  \lesssim \frac{1-|a|}{\big( \log\frac{e}{1-|a|} \big)^2}
\end{align*}
as $|a|\to 1^-$. This contradicts the fact
\begin{equation*}
  \int_{S_a} \frac{dm(z)}{(1-|z|^2)\big( \log\frac{e}{1-|z|} \big)^2} \asymp \frac{1-|a|}{ \log\frac{e}{1-|a|}},
  \quad |a|\to 1^-,
\end{equation*}
and hence $\LMOA'' \neq \L{1}$. The remaining part of (ii) is a straightforward computation. Note that 
the inclusion $\L{\alpha} \subsetneq {\BMOA''}$, for any $1/2<\alpha<\infty$, is strict by $A(z)=(1-z)^{-2}$.

To prove (iii) it suffices to prove the latter assertion, as $\L{3/2} \subset \LMOA''$ follows
directly from \eqref{eq:bmoacond2}. If $A(z)=(1-z)^{-2}\big( \log\frac{e}{1-z} \big)^{-1}$ for $z\in\D$, then
$A\notin \bigcup_{1<\alpha<\infty} \L{\alpha}$. To show that $A\in\LMOA''$, it is enough to verify \eqref{eq:bmoacond2}
for $0<a<1$. Since
\begin{equation*}
  \abs{\log\frac{e}{1-z}}
  \geq\log\frac{e}{|1-z|}
  \geq\log\frac{e}{2(1-a)},\quad z\in S_a,
\end{equation*}
we conclude 
\begin{equation*}
  \label{eq:exampleH2log}
  \begin{split}
    & \sup_{0<a<1} \, \frac{\brac{\log\frac{e}{1-a}}^2}{1-a}\int_{S_a}|A(z)|^2(1-|z|^2)^3\,dm(z)\\
    & \qquad \lesssim  \sup_{0<a<1} \, 
    \frac{1}{1-a}\int_a^1\!
    \int_0^{2\pi}\frac{d\theta}{|1-re^{i\theta}|^4} (1-r^2)^3\, r\,dr < \infty.
  \end{split}
\end{equation*}

In order to prove (iv), let $A(z) = \sum_{k=1}^\infty a_k z^{n_k}$ for $z\in\D$.
If $h(z) = \sum_{k=1}^\infty z^{n_k}$ for $z\in\D$, then $h\in\mathcal{B}$
with $M_\infty(r,h) = \sum_{k=1}^\infty r^{n_k} \lesssim \log\frac{e}{1-r}$ for $0<r<1$. 
By the Cauchy-Schwarz inequality,
\begin{equation*}
  M_\infty(r,A) \lesssim \left( \, \sum_{k=1}^\infty |a_k|^2 r^{n_k} \!\right)^{1/2} \left( \log\frac{e}{1-r} \right)^{1/2},
  \quad 0<r<1.
\end{equation*}
It follows that
\begin{align*}
  & \sup_{a\in\D}\,
    \frac{\big( \log \frac{e}{1-|a|}\big)^2}{1-|a|}
    \int_{S_a} |A(z)|^2(1-|z|^2)^3 \, dm(z)\\
  & \qquad \lesssim \int_0^1 M_\infty(r,A)^2 (1-r)^3 \left( \log\frac{e}{1-r} \right)^2 \, dr\\
  & \qquad \lesssim \sum_{k=1}^\infty |a_k|^2 \int_0^1 r^{n_k} (1-r)^3 \left( \log\frac{e}{1-r} \right)^3 \, dr
    \asymp \sum_{k=1}^\infty |a_k|^2 \, \frac{(\log n_k)^3}{n_k^4},
\end{align*}
where the asymptotic equality follows from \cite[Lemma~1.3]{PR:2014}.
This concludes the proof of Lemma~\ref{lemma:comparison}.
\end{proof}


\section{Bounded solutions}\label{sc:bounded}

We consider bounded solutions of~\eqref{eq:de1}. As usual, the space~$H^\infty$ consists of 
$f \in\mathcal{H}(\D)$ for which $\nm{f}_{H^\infty}=\sup_{z\in\D}|f(z)|<\infty$.
The proof of Theorem~\ref{th:bounded} takes advantage of the well-known representation formula
\begin{equation} \label{eq:cauchyIF}
  g(\zeta)=\frac{1}{2\pi}\int_0^{2\pi}\frac{g(e^{it})}{1-e^{-it}\zeta}\,dt,\quad \zeta\in\D,
\end{equation}
which holds for any $g\in H^1$~\cite[Theorem~3.6]{D:1970}.

Let $M$ be the collection of all (finite) complex Borel measures on~$\T = \partial\D$. For $\mu\in M$, 
the total variation measure $|\mu|$ is defined as a set function
\begin{equation*}
  |\mu|(E)=\sup\,\sum_{j}|\mu(E_j)|,
\end{equation*}
where the supremum is taken over all countable partitions $\set{E_j}$ of $E\subset\T$. 
Moreover, $\nm{\mu}=|\mu|(\T)$ is the total variation of $\mu$ \cite[Chapter~6]{R:1987}.
Let~$\mathcal{K}$ be the space of Cauchy transforms, which consists of those analytic functions 
in~$\D$ that are of the form
\begin{equation*}
  (K\mu)(z)=\int_\T\frac{d\mu(\zeta)}{1-\overline{\zeta}z},\quad z\in\D,
\end{equation*}
for some $\mu\in M$. For each $f\in\mathcal{K}$ there is a set $M_f=\big\{\mu\in M : f=K\mu\big\}$
of measures that represent $f$, and produce the norm
\begin{equation*}
  \nm{f}_{\mathcal{K}}=\inf\big\{\nm{\mu} :  \mu\in M_f\big\}.
\end{equation*}
We refer to \cite{CMR:2006} for more details.


\begin{proof}[Proof of Theorem~\ref{th:bounded}]
Let $f$ be any solution of~\eqref{eq:de1}, and write $f_r(z)=f(rz)$ for $0\leq r<1$. Then $f_r$ is
analytic in~$\overline{\D}$ and satisfies $f_r''(w)+r^2A(rw) f_r(w)=0$. By \eqref{eq:diffrep},
\eqref{eq:cauchyIF} for $g=f_r$, and Fubini's theorem, we conclude
\begin{equation*}
  \label{eq:bounded}
  \begin{split}
    f_r(z)
    &=-\frac{1}{2\pi}\int_0^{2\pi} f_r(e^{it})
    \int_0^z\int_0^\zeta\frac{r^2A(rw)}{1-e^{-it}w}\,dw\,d\zeta\,dt+f_r'(0)z+f_r(0), \quad z\in\D.
  \end{split}
\end{equation*}
For all $0<r<1$ sufficiently large, and $z\in\D$, there exists $\mu_{r,z}\in M$ such that
\begin{equation}
  \label{eq:repmeas}
  A_{r,z}(u)=(K\mu_{r,z})(u),\quad u\in\D,
\end{equation}
and $\nm{\mu_{r,z}}<\delta$ for some absolute constant $0<\delta<1$.
Hence, by~\cite[Theorem~4.2.2]{CMR:2006},
\begin{equation*}
  \label{eq:bounded2}
  \begin{split}
    f_r(z)
    &=-\frac{r^2}{2\pi}\int_0^{2\pi}f_r(e^{it})\overline{(K\mu_{r,z})(e^{it})}\,dt
    +f_r'(0)z+f_r(0)\\
    &=- r^2 \int_\T f_r(x)\overline{d\mu_{r,z}(x)}+f_r'(0)z+f_r(0).
  \end{split}
\end{equation*}
By~\cite[Theorem~6.12]{R:1987}, there exist measurable functions $h_{r,z}$ such that $|h_{r,z}(\zeta)|=1$ 
for all  $\zeta\in\T$ and the polar decompositions $d\mu_{r,z}=h_{r,z}\,d|\mu_{r,z}|$ hold. Therefore
\begin{equation*}
  \label{eq:bounded3}
  \begin{split}
    |f_r(z)|
    &\leq\abs{\int_\T f_r(x)\overline{h_{r,z}(x)}\,d|\mu_{r,z}|(x)}+|f_r'(0)|+|f_r(0)|\\
    &\leq\nm{f_r}_{H^\infty} \int_\T \,d|\mu_{r,z}|+|f_r'(0)|+|f_r(0)|\\
    &\leq\nm{f_r}_{H^\infty}\nm{\mu_{r,z}}+|f'(0)|+|f(0)|.
  \end{split}
\end{equation*}
The assertion follows.
\end{proof}

For each $0<r<1$ and $z\in\D$, it is easy to see that
\begin{equation*}
  d\mu_{r,z}(x)
  =\overline{\brac{\int_0^z\int_0^\zeta\frac{A(rw)}{x-w}\,dw\,d\zeta}\frac{dx}{2\pi i}}, \quad x\in\T,
\end{equation*}
is one of the representing measures for which~\eqref{eq:repmeas} holds, and hence $\nm{A_{r,z}}_{\mathcal{K}}\leq\nm{\mu_{r,z}}.$


\section{Solutions of bounded and vanishing mean oscillation}\label{Section:BMOA}\label{sc:bmoa}

The space $\BMOA$ consists of those $f\in\mathcal{H}(\D)$ for which
\begin{equation} \label{eq:bmoadef}
    \nm{f}_{\BMOA}^2=\sup_{a\in\D}\;\nm{f_a}_{H^2}^2<\infty,
\end{equation}
where $f_a(z)=f(\varphi_a(z))-f(a)$ and $\varphi_a(z)=(a-z)/(1-\overline{a}z)$ for $a,z\in\D$. By the Littlewood-Paley identity,
\begin{equation}
\label{eq:bmoanorm}
\begin{split}
\nm{f}_{\BMOA}^2 &\leq4 \, \sup_{a\in\D}\, \int_\D|f'(z)|^2(1-|\varphi_a(z)|^2)\,dm(z)\leq 4\, \nm{f}_{\BMOA}^2,
\end{split}
\end{equation}
see~\cite[pp. 228--230]{G:1981}. Clearly, $\BMOA$ is a subspace of the Bloch space $\mathcal{B}$.

A positive Borel measure $\mu$ on $\D$ is called a Carleson measure, if
\begin{equation*}
  \nm{\mu}_{\textrm{Carleson}}=\sup_{a\in\D}\, \frac{\mu(S_a)}{1-|a|}<\infty.
\end{equation*}
The set $S_a=\set{re^{i\theta}\,:\,|a|<r<1,\,|\theta-\arg(a)|\leq (1-|a|)/2}$ denotes the Carleson square 
with respect to $a\in\D\setminus\set{0}$ while $S_0=\D$. There exists a constant $0<\alpha<\infty$ such that
\begin{equation*} 
  \frac{1}{1-|a|}\leq \alpha\, \frac{1-|a|^2}{|1-\overline{a}z|^2}=\alpha\, |\varphi_a'(z)|,\quad z\in S_a,\quad a\in\D,
\end{equation*}
since $|1-\overline{a}z|\leq |1-|a|^2|+||a|^2-\overline{a}z|\lesssim (1-|a|)$. Consequently,
\begin{equation} \label{eq:sstar}
  \nm{\mu}_{\textrm{Carleson}}
  = \sup_{a\in\D} \, \int_{S_a}\frac{1}{1-|a|}\,d\mu(z)
  \leq\alpha \cdot \sup_{a\in\D} \,\int_\D|\varphi_a'(z)|\,d\mu(z).
\end{equation}

We prove Theorem~\ref{th:bmoa2} and consider its counterpart for $\VMOA$.
Theorem~\ref{th:bmoa2} is inspired by \cite[Theorem~3.1]{SZ:1999}.
We return to consider $\BMOA$ and $\VMOA$ solutions in Section~\ref{sec:bmoa_alt},
where parallel results are obtained by using the representation formula for $H^1$ functions.


\begin{proof}[Proof of Theorem~\ref{th:bmoa2}]
The proof consists of two steps. First, we show that
\begin{equation} \label{eq:firstclaim}
  \sup_{1/2<r<1}\, \sup_{a\in\D}\,
  \left( \log \frac{e}{1-|a|}  \right)^2
  \int_{\D} |A(rz)|^2(1-|z|^2)^2 (1-|\varphi_a(z)|^2)  \, dm(z)
  \lesssim \nm{A}_{\LMOA''}^2.
\end{equation}
Denote
\begin{equation*}
  I(a,r)  = \int_{\D} |A(rz)|^2(1-|z|^2)^2 (1-|\varphi_a(z)|^2)  \, dm(z), \quad 0<r<1, \quad a\in\D,
\end{equation*}
for short. Let $1/2<|a|<1/(2-r)$. Since $|1-\overline{a}z| \leq 2 \, |1 - \overline{a}z/r|$ for $|z|\leq r$, 
\begin{align*}
  I(a,r) & = \int_{D(0,r)} |A(z)|^2\big(1-\left| \tfrac{z}{r} \right|^2\big)^3 
           \frac{1-|a|^2}{\left|1-\overline{a} \, \frac{z}{r}\right|^2} \, \frac{dm(z)}{r^2}\\
         &\leq \frac{4}{r^2} \, \int_{\D} |A(z)|^2 (1-|z|^2)^2 (1-|\varphi_a(z)|^2) \, dm(z)
\end{align*}
is uniformly bounded for $1/2<r<1$ and $1/2<|a|<1/(2-r)$. Let $1/(2-r)\leq |a|<1$. Now
\begin{align*}
  I(a,r) & \leq \nm{A}_{\L{1}}^2 \int_{\D} \frac{\big(1-|z|^2\big)^2 \big(1-|\varphi_a(z)|^2\big)}{\big(1-|rz|^2\big)^4 
           \big(\log\frac{e}{1-|rz|} \big)^2} \, dm(z)\\
         & \lesssim \nm{A}_{\L{1}}^2 \int_0^1 \frac{(1-s)^3 (1-|a|)}{(1-rs)^4 
           \big(\log\frac{e}{1-rs} \big)^2 (1-|a|s)} \, ds.
\end{align*}
As $t \mapsto (1-t)^2\big( \log\frac{e}{1-t} \big)$ is decreasing for $0<t<1$, we apply $r\leq 2 - 1/|a|$ to obtain
\begin{align*}
  I(a,r) & \lesssim \nm{A}_{\L{1}}^2 (1-|a|) \int_0^{|a|} \frac{ds}{(1-s)^2 
           \big(\log\frac{e}{1-s} \big)^2}
           + \frac{\nm{A}_{\L{1}}^2}{(1-|a|)^4 
           \big(\log\frac{e}{1-|a|} \big)^2} \int_{|a|}^1 (1-s)^3 \, ds\\
         & \lesssim \nm{A}_{\L{1}}^2 \left( \log\frac{e}{1-|a|} \right)^{-2}
\end{align*}
for all $1/2<r<1$ and $1/(2-r)\leq |a|<1$. 
Since $\nm{A}_{\L{1}}^2 \lesssim \nm{A}_{\LMOA''}^2$ by the proof of Lemma~\ref{lemma:comparison}(ii), 
this completes the proof of \eqref{eq:firstclaim}.

Second, we proceed to consider the differential equation \eqref{eq:de1}.
Let $f$ be a non-trivial solution of \eqref{eq:de1}. 
By Lemma~\ref{lemma:comparison}(ii) and \cite[Corollary~4(b)]{HKR:2016},
we may assume that $f\in\B$. Now, \eqref{eq:de1} and \eqref{eq:bmoanorm} yield
\begin{align*}
  \nm{f_r}_{\BMOA}^2 & \asymp  \sup_{a\in\D} \left( |f'(ra)|^2(1-|a|^2)^2 \, r^2 
                       + \, \int_{\D} r^4|f''(rz)|^2 (1-|z|^2)^2 (1-|\varphi_a(z)|^2) \, dm(z) \right) \\
                     & \lesssim  \nm{f_r}_\B^2 + \sup_{a\in\D} \, \int_{\D} |f_r(z)-f_r(a)|^2 \, |A(rz)|^2 (1-|z|^2)^2 (1-|\varphi_a(z)|^2) \, dm(z)\\
                     & \qquad    + \sup_{a\in\D} \, |f_r(a)|^2 \int_{\D} |A(rz)|^2 (1-|z|^2)^2 (1-|\varphi_a(z)|^2) \, dm(z)\\
                     & \lesssim \nm{f_r}_\B^2 + I_1 + I_2
\end{align*}
with absolute comparison constants. By Carleson's theorem~\cite[Theorem~9.3]{D:1970} and \eqref{eq:bmoadef},
\begin{align*}
  I_1 & \lesssim 
        \sup_{a\in\D} \, \int_{\D} |(f_r)_a(z)|^2 \, \big|A(r \varphi_a(z))\big|^2 \big(1-|\varphi_a(z)|^2\big)^3 \, |\varphi_a'(z)| \, dm(z)\\
      & \lesssim \sup_{a\in\D} \left( \bnm{(f_r)_a}_{H^2}^2 \cdot \sup_{b\in\D} \,
        \int_{\D} \big|A(r \varphi_a(z))\big|^2 \big(1-|\varphi_a(z)|^2\big)^3 |\varphi_a'(z)| |\varphi_b'(z)|\, dm(z) \right)\\
      & \lesssim \nm{f_r}_{\BMOA}^2  \cdot \sup_{c\in\D} \,
        \int_{\D} |A(rz)|^2 (1-|z|^2)^2 \big(1-|\varphi_c(z)|^2\big)\, dm(z).
\end{align*}
Estimation of $I_2$ is easier. By \cite[Corollary~5.3]{G:2001},
\begin{equation*}
  I_2 \lesssim \nm{f_r}_{\BMOA}^2 \cdot \sup_{a\in\D} \,
  \left( \log\frac{e}{1-|a|} \right)^2 \int_{\D} |A(rz)|^2 (1-|z|^2)^2 (1-|\varphi_a(z)|^2) \, dm(z).
\end{equation*}
If~\eqref{eq:bmoacondnew} is sufficiently small, then \eqref{eq:firstclaim} implies that
$\nm{f_r}_{\BMOA}$ is uniformly bounded for $1/2<r<1$. By letting $r\to 1^-$, we conclude $f\in\BMOA$.
\end{proof}

The space $\VMOA$ consists of those $f\in H^2$ for which
\begin{equation*}
  \lim_{|a|\to 1^-} \, \nm{f_a}_{H^2}^2=0,
\end{equation*}
where $f_a$ is the auxiliary function in the beginning of Section~\ref{sc:bmoa}. Clearly, 
$\VMOA$ is a~subspace of the little Bloch space $\mathcal{B}_0$. As Theorem~\ref{th:bmoa2} is motivated by
\cite[Theorem~3.1]{SZ:1999},  the counterpart of the following result is  \cite[Theorem~3.6]{SZ:1999}.


\begin{theorem}
\label{th:vmoa_new}
Let $A\in\mathcal{H}(\D)$. If \eqref{eq:bmoacondnew} is sufficiently small and
\begin{equation*}
  \lim_{|a|\to 1^-}\,
  \left( \log \frac{e}{1-|a|}  \right)^2
  \int_{\D} |A(z)|^2(1-|z|^2)^2 (1-|\varphi_a(z)|^2)  \, dm(z) = 0,
\end{equation*}
then all solutions $f$ of~\eqref{eq:de1} satisfy $f\in\VMOA$.
\end{theorem}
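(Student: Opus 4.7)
The plan is to adapt the proof of Theorem~\ref{th:bmoa2} to capture decay as $|a|\to 1^-$, instead of a uniform bound. The first hypothesis coincides with that of Theorem~\ref{th:bmoa2}, so that result already delivers $f\in\BMOA$, and in particular the logarithmic growth $|f(a)|\lesssim \nm{f}_{\BMOA}\log\frac{e}{1-|a|}$ is available via \cite[Corollary~5.3]{G:2001}. It therefore suffices to show that $\nm{f_a}_{H^2}\to 0$ as $|a|\to 1^-$.

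The key step will be the same $f''$-based characterization used to drive the proof of Theorem~\ref{th:bmoa2}:
$$\nm{f_a}_{H^2}^2 \asymp |f'(a)|^2(1-|a|^2)^2 + \int_\D |f''(z)|^2(1-|z|^2)^2(1-|\varphi_a(z)|^2)\,dm(z).$$
I would substitute $f''=-Af$ into the integral and split via $|f(z)|^2\leq 2|f(z)-f(a)|^2+2|f(a)|^2$ into two pieces $I_1(a)$ and $I_2(a)$. Repeating the change of variables $z\mapsto \varphi_a(z)$ and the Carleson embedding used in the proof of Theorem~\ref{th:bmoa2} gives
$$I_1(a)\le C\,\nm{f_a}_{H^2}^2\cdot \sup_{b\in\D}\int_\D |A(z)|^2(1-|z|^2)^2(1-|\varphi_b(z)|^2)\,dm(z),$$
and by choosing \eqref{eq:bmoacondnew} small enough this supremum is small, so $I_1(a)$ may be absorbed into the left-hand side. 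For $I_2(a)$, the BMOA growth of $f$ combines with the vanishing hypothesis on $A$ to yield
$$I_2(a)\lesssim \nm{f}_{\BMOA}^2\Big(\log\tfrac{e}{1-|a|}\Big)^2\int_\D |A(z)|^2(1-|z|^2)^2(1-|\varphi_a(z)|^2)\,dm(z)\longrightarrow 0.$$

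The main obstacle is the remaining Bloch-type summand $|f'(a)|^2(1-|a|^2)^2$, which must also tend to $0$, i.e.\ one needs $f\in\B_0$. I plan to extract this from the little-Bloch counterpart of \cite[Corollary~4(b)]{HKR:2016}, developed in Section~\ref{Section:Bloch}: the present vanishing hypothesis on $A$, combined with \cite[Lemma~5.3]{PR:2014} and Lemma~\ref{lemma:comparison}(ii), forces the $\L{1}$-type coefficient norm of $A$ to vanish near the unit circle in the sharp ``little-oh'' sense, which then propagates to solutions and places $f$ in $\B_0$. Assembling the three estimates yields $\nm{f_a}_{H^2}\to 0$ as $|a|\to 1^-$, and hence $f\in\VMOA$.
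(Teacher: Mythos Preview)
Your proposal is correct and follows exactly the route the paper indicates: the authors omit the proof of Theorem~\ref{th:vmoa_new} because it is ``similar to the proof of Theorem~\ref{th:bmoa2}'', and your argument is precisely that adaptation---use Theorem~\ref{th:bmoa2} to place $f$ in $\BMOA$, rewrite $\nm{f_a}_{H^2}^2$ via the second-derivative characterization, absorb the $I_1(a)$ term by the smallness of \eqref{eq:bmoacondnew}, kill $I_2(a)$ with the vanishing hypothesis together with the logarithmic growth \cite[Corollary~5.3]{G:2001}, and handle the residual $|f'(a)|^2(1-|a|^2)^2$ by showing $f\in\B_0$ through the little-oh $\L{1}$ condition (which follows from the vanishing LMOA$''$ assumption via subharmonicity, as in the proof of Lemma~\ref{lemma:comparison}(ii)) and the $\B_0$ result stated at the end of Section~\ref{Section:Bloch}.
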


The proof of Theorem~\ref{th:vmoa_new}
is omitted, since it is similar to the proof of Theorem~\ref{th:bmoa2}.


\section{Solutions in the Bloch and the little Bloch spaces}\label{Section:Bloch}

An integrable function $\om:\D\to[0,\infty)$ is called a weight. It is radial if $\om(u)=\om(|u|)$ for all $u\in\D$.
For $0<p<\infty$ and a weight $\om$, the weighted Bergman space $A^p_\om$ consists
of those $f\in\HH(\D)$ for which
\begin{equation*}
  \|f\|_{A^p_\om}^p=\int_\D|f(u)|^p\om(u)\,dm(u)<\infty.
\end{equation*}
For a radial weight $\om$,  we define $\widehat{\om}(u)=\int_{|u|}^1\om(r)\,dr$ for $u\in\D$.
We denote $\om \in \RR$ whenever $\om$ is radial and there exist constants
$C=C(\om)>0$, $\alpha=\alpha(\om)>0$ and $\b=\b(\om)\ge \alpha$ such that
\begin{equation}
  \begin{split}\label{eq:regular}
    C^{-1}\left(\frac{1-r}{1-t}\right)^{\alpha}\widehat{\om}(t)\le\widehat{\om}(r)
    \le C\left(\frac{1-r}{1-t}\right)^{\b}\widehat{\om}(t),\quad 0\le r\le
    t<1.
  \end{split}
\end{equation}

Let $0<p<\infty$ and $\omega$ be a radial weight. If $\widehat{\omega}(r)=0$ for some $0<r<1$, 
then $A^p_\omega=\mathcal{H}(\D)$. Let $\omega$ be  a radial weight such that $\widehat{\omega}(r)>0$ 
for all $0\leq r<1$. By standard estimates,
\begin{equation*}
  \nm{f}_{A^p_\omega}^p \gtrsim M_p(r,f)^p \, \widehat{\omega}(r)
  \gtrsim M_\infty(2r-1,f)^p(1-r)
  \, \widehat{\omega}(r), \quad 1/2<r<1,
\end{equation*}
where $M_p(r,f)$ denotes the $H^p$ mean of $f$, and hence
\begin{equation} \label{eq:converg}
  |f(z)| \lesssim \frac{\nm{f}_{A^p_\omega}}{\widehat\omega(z)^{1/p}(1-|z|)^{1/p}}, \quad 1/2<|z|<1.
\end{equation}
We will concentrate on the case $p=2$. By \eqref{eq:converg},
the norm convergence in $A^2_\om$ implies the uniform convergence on compact subsets of $\D$,
and consequently each point evaluation $L_\z(f)=f(\z)$ is a~bounded linear functional in the Hilbert space~$A^2_\om$. 
Hence, there exist unique reproducing kernels $B^\om_\z\in A^2_\om$ with
$\|L_\z\|=\|B^\om_\z\|_{A^2_\om}$ such that
\begin{equation}\label{eq:Bergmanrep-general}
  f(\z)=\langle f,B^\om_\z\rangle_{A^2_\om}=\int_\D f(u)\overline{B^\om_\z(u)}\om(u)\,dm(u),\quad f\in A^2_\om,
\end{equation}
Moreover, the normalized monomials $(2 \, \om_{2n+1})^{-1/2}\, z^n$, for $n\in\N \cup \{0\}$,
form the standard orthonormal basis of $A^2_\om$ and hence
\begin{equation} \label{repker}
  B^\om_\z(u)=\sum_{n=0}^\infty\frac{(u \overline{\z})^n}{2\, \om_{2n+1}},\quad u,\z\in\D;
\end{equation}
see \cite[Theorem~4.19]{Z:2007} for details in the classical case.
Here $\om_x=\int_0^1r^x\om(r)\,dr$ for $1\leq x < \infty$.
Weight $\om$ is called normalized if $\om_1=1/2$, which implies that $\om(\D)=\int_\D\om(u)\,dm(u)=2\om_1=1$.

We begin with a lemma which shows that the derivative of $B_\z^\om$ is closely related to
the reproducing kernel of another Bergman space with a suitable chosen weight.
For example, $B^\omega_\z(u) = (1-u\overline{\zeta})^{-2-\alpha}$ is the reproducing kernel
corresponding to the standard weight $\omega(u) = (\alpha+1)(1-|u|^2)^\alpha$, $\alpha>-1$, while
$(B_\z^\omega)'(u) = (2+\alpha)\overline{\z}(1-u \overline{\z})^{-3-\alpha}$ is related to the reproducing 
kernel of the Bergman space with the weight $\widetilde{\om}(u) = (1-|u|^2)^{\alpha+1}$. In general, we define
\begin{equation*}
  \widetilde{\om}(u)  =2\int_{|u|}^1\om(r)r\,dr, \quad u\in\D,
\end{equation*}
for any radial weight $\om$.


\begin{lemma}\label{Lemma:kernels-derivative}
If $\om$ is radial then $(B_\z^\om)'(u)=\overline{\z} \, B_\z^{\widetilde{\om}}(u)$ for $u,\zeta\in\D$.
\end{lemma}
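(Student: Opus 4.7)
The plan is to verify the identity by direct computation using the explicit series representation \eqref{repker} for the reproducing kernel. Since $\omega$ is radial, we have
\begin{equation*}
  B_\zeta^\omega(u) = \sum_{n=0}^\infty \frac{(u\overline{\zeta})^n}{2\, \omega_{2n+1}}, \quad u,\zeta\in\D,
\end{equation*}
and this series converges uniformly on compact subsets of $\D$, so term-by-term differentiation is justified.

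First, I would differentiate with respect to $u$ and shift the summation index, obtaining
\begin{equation*}
  (B_\zeta^\omega)'(u)
  = \sum_{n=1}^\infty \frac{n\,\overline{\zeta}\,(u\overline{\zeta})^{n-1}}{2\,\omega_{2n+1}}
  = \overline{\zeta} \sum_{n=0}^\infty \frac{(n+1)(u\overline{\zeta})^n}{2\,\omega_{2n+3}}.
\end{equation*}
The task then reduces to showing that the coefficient $(n+1)/(2\omega_{2n+3})$ coincides with $1/(2\widetilde{\omega}_{2n+1})$, i.e.\ that $\widetilde{\omega}_{2n+1} = \omega_{2n+3}/(n+1)$.

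This identity I would establish by a short Fubini calculation: since $\widetilde{\omega}(r) = 2\int_r^1 \omega(s) s\, ds$,
\begin{equation*}
  \widetilde{\omega}_{2n+1}
  = \int_0^1 r^{2n+1} \cdot 2\!\int_r^1 \omega(s)\, s\, ds\, dr
  = 2\int_0^1 \omega(s)\, s \int_0^s r^{2n+1}\, dr\, ds
  = \frac{1}{n+1}\int_0^1 \omega(s)\, s^{2n+3}\, ds
  = \frac{\omega_{2n+3}}{n+1}.
\end{equation*}
Substituting this back and comparing with the series \eqref{repker} for $B_\zeta^{\widetilde{\omega}}$ yields $(B_\zeta^\omega)'(u) = \overline{\zeta}\, B_\zeta^{\widetilde{\omega}}(u)$, as claimed.

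The only mildly technical point is to make sure $\widetilde{\omega}$ qualifies as a weight for which the formula \eqref{repker} is meaningful, i.e.\ that $\widetilde{\omega}_{2n+1}>0$ for all $n$ (so that division is legal) and that $\widetilde{\omega}$ is integrable. Integrability is automatic from Fubini since $\widetilde{\omega}_1 = \omega_3/1 < \infty$, and positivity of every $\widetilde{\omega}_{2n+1}$ follows from that of $\omega_{2n+3}$, which is needed in the very statement of \eqref{repker} for $B_\zeta^\omega$. No genuine obstacle arises; the lemma is a clean bookkeeping identity whose content is the Fubini computation above.
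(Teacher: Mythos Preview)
Your proof is correct and follows essentially the same approach as the paper: both arguments differentiate the series \eqref{repker} term by term and reduce the claim to the moment identity $\widetilde{\omega}_{2n+1} = \omega_{2n+3}/(n+1)$, which is then verified by the same Fubini computation. Your version is slightly more explicit about the justification for term-by-term differentiation and the legitimacy of \eqref{repker} for $\widetilde{\omega}$, but the substance is identical.
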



\begin{proof}
It is clear that representations \eqref{repker}
exist for both $B_\zeta^{\omega}$ and $B_\zeta^{\widetilde{\omega}}$. By Fubini's theorem,
\begin{equation*}
  \widetilde{\om}_{2n+1}
  =2\int_0^1 \om(s) s\int_0^sr^{2n+1}\,dr\,ds=\frac{\om_{2n+3}}{n+1},\quad n\in\N\cup\{0\},
\end{equation*}
and hence
\begin{equation*}
  (B_\z^\om)'(u)=\overline{\z}\,
  \sum_{n=0}^\infty\frac{(n+1)(u \overline{\z})^n}{2\, \om_{2n+3}}=\overline{\z} \, B_\z^{\widetilde{\om}}(u),\quad u,\zeta\in\D.
\end{equation*}
This proves the assertion.
\end{proof}

The following auxiliary result is well-known to experts. For a radial weight $\om$, we define
\begin{equation*}
  \om^\star(u)  =\int_{|u|}^1\log\frac{r}{|u|}\, \om(r)\,r \, dr,\quad u\in\D\setminus\{0\}.
\end{equation*}


\begin{lemma}\label{lm:green2.1}
If $f,g\in H^2$, then
\begin{equation}\label{eq:greentd}
  \frac{1}{2\pi}\int_0^{2\pi} f(e^{it})\overline{g(e^{it})}\,dt
  =2\int_\D f'(u)\overline{g'(u)}\log\frac{1}{|u|}\,dm(u)+f(0)\overline{g(0)}.
\end{equation}
Moreover, if $f,g\in\HH(\D)$ and $\om$ is a normalized radial weight, then
\begin{equation*}
  \langle f,g\rangle_{A^2_\om}=4 \, \langle f',g'\rangle_{A^2_{\om^\star}}+f(0)\overline{g(0)}.
\end{equation*}
\end{lemma}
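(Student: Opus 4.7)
The plan is to prove both identities by Taylor expansion and coefficient-by-coefficient comparison, using orthogonality of monomials in the relevant function spaces.

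For (\ref{eq:greentd}), write $f(z)=\sum_{n\ge 0} a_n z^n$ and $g(z)=\sum_{n\ge 0} b_n z^n$; the $H^2$ hypothesis gives $\{a_n\},\{b_n\}\in\ell^2$. By Parseval on the unit circle, the left-hand side equals $\sum_{n\ge 0} a_n\overline{b_n}$. For the right-hand side, I pass to polar coordinates in the Bergman integral and use the orthogonality $\int_0^{2\pi} e^{i(n-m)\theta}\,d\theta = 2\pi\,\delta_{nm}$ to annihilate all cross terms in $f'(u)\overline{g'(u)}$, arriving at
\begin{equation*}
  2\int_\D f'(u)\overline{g'(u)}\log\frac{1}{|u|}\,dm(u)
  = \sum_{n\ge 1} 4n^2 a_n\overline{b_n}\int_0^1 r^{2n-1}\log\frac{1}{r}\,dr.
\end{equation*}
Integration by parts gives $\int_0^1 r^{2n-1}\log(1/r)\,dr=1/(4n^2)$, so the series collapses to $\sum_{n\ge 1} a_n\overline{b_n}$, and adding $f(0)\overline{g(0)}=a_0\overline{b_0}$ recovers the left-hand side.

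For the Bergman identity, since $\om$ is radial, monomials are orthogonal in $A^2_\om$ with $\|z^n\|_{A^2_\om}^2 = 2\om_{2n+1}$, and the same holds in $A^2_{\om^\star}$ with $\om$ replaced by $\om^\star$. Expanding $f$ and $g$ as above yields
\begin{equation*}
  \langle f,g\rangle_{A^2_\om}=\sum_{n\ge 0} 2\om_{2n+1}\,a_n\overline{b_n},
  \qquad
  \langle f',g'\rangle_{A^2_{\om^\star}} = \sum_{n\ge 1} 2n^2\om^\star_{2n-1}\,a_n\overline{b_n}.
\end{equation*}
For $n=0$ the normalization $\om_1=1/2$ produces $a_0\overline{b_0}$, matching the boundary term. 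For $n\ge 1$ the claim reduces to the single moment identity $\om_{2n+1}=4n^2\,\om^\star_{2n-1}$, which I obtain by Fubini applied to the definition of $\om^\star$:
\begin{equation*}
  \om^\star_{2n-1} = \int_0^1 \om(r)r\int_0^r s^{2n-1}\log\frac{r}{s}\,ds\,dr
  = \int_0^1 \om(r)r\cdot\frac{r^{2n}}{4n^2}\,dr
  = \frac{\om_{2n+1}}{4n^2},
\end{equation*}
where the inner integral is computed via the substitution $s=rt$ together with the value $\int_0^1 t^{2n-1}\log(1/t)\,dt=1/(4n^2)$ already used above.

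The only mild obstacle is justifying term-by-term manipulations. For (\ref{eq:greentd}) the absolute convergence is automatic from Cauchy--Schwarz and $\{a_n\},\{b_n\}\in\ell^2$. For the Bergman identity, one may first establish the formula for polynomials (where every series is finite), and then deduce the general case by polarization from the diagonal $f=g$, in which all summands are nonnegative and monotone convergence applies; the identity is then to be read as the statement that both sides are simultaneously finite or infinite and equal. Alternatively, one can bypass the Taylor calculation in the second step by simply inserting polar coordinates in $\langle f,g\rangle_{A^2_\om}$, invoking (\ref{eq:greentd}) on each dilate $f_r(z)=f(rz)$, $g_r(z)=g(rz)$, and then exchanging the order of integration; the kernel produced by Fubini is precisely $\om^\star$.
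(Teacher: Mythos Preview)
Your proof is correct. The paper's own argument takes a different route: for \eqref{eq:greentd} it simply cites the identity as a special case of a known result (Zhu's \emph{Operator Theory in Function Spaces}, Theorem~9.9), and for the weighted Bergman identity it applies \eqref{eq:greentd} to each dilate $f_r,g_r$, integrates both sides against $\om(r)r\,dr$, and invokes Fubini---which is precisely the alternative you sketch in your final sentence. Your primary argument via Taylor expansion and the moment identity $\om^\star_{2n-1}=\om_{2n+1}/(4n^2)$ is more self-contained and makes the role of the normalization $\om_1=1/2$ completely transparent; the paper's approach is shorter but relies on an external reference for the first part. Both handle the convergence issue for general $f,g\in\HH(\D)$ somewhat loosely (the paper also applies Fubini without comment), and in practice the lemma is only invoked in situations where integrability is clear, so your polarization remark is an adequate justification.
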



\begin{proof}
Identity \eqref{eq:greentd} is a special case of~\cite[Theorem~9.9]{Z:2007}.
Let $f,g\in\HH(\D)$. By \eqref{eq:greentd},
\begin{equation*}
  \begin{split}
    \frac1{\pi}\int_0^{2\pi} f(re^{it})\overline{g(re^{it})}\,dt
    &=4\int_{D(0,r)} f'(u)\overline{g'(u)}\log\frac{r}{|u|}\,dm(u) + 2 f(0)\overline{g(0)}.
  \end{split}
\end{equation*}
The assertion follows by integrating both sides with respect to the measure $\omega(r)r\,dr$ and using Fubini's theorem.
\end{proof}

Recall that the Bloch space $\mathcal{B}$ consists of those $f\in\HH(\D)$ for which
\begin{equation*}
  \nm{f}_{\mathcal{B}}=\sup_{z\in\D}\, |f'(z)| (1-|z|^2) <\infty.
\end{equation*}


\begin{theorem}\label{th:bloch-general}
Let $\om\in\RR$ be normalized, and $A\in\HH(\D)$ such that
\begin{equation} \label{eq:Elimsup}
  \limsup_{r\to 1^-}\;\sup_{z\in\D}\,(1-|z|^2)\int_\D\left|\int_0^z\overline{(B_\z^\om)'(u)} A(r\z)\,d\z\right|
  \frac{\om^\star(u)}{1-|u|^2}\,dm(u) <\frac14.
\end{equation}
Then every solution $f$ of~\eqref{eq:de1} satisfies $f\in\mathcal{B}$, and
\begin{equation*}
  \|f\|_\B\le\frac{1}{1-4 \, X_\B(A)}\left(|f(0)|\, \sup_{z\in\D}\, (1-|z|^2)\left|\int_0^zA(\z) \, d\z\right|+|f'(0)|\right),
\end{equation*}
where
\begin{equation*}
  X_\B(A)=\sup_{z\in\D}\,(1-|z|^2)\int_\D\left|\int_0^z\overline{(B_\z^\om)'(u)}
    A(\z)\,d\z\right|\frac{\om^\star(u)}{1-|u|^2}\,dm(u) < \frac{1}{4}.
\end{equation*}
\end{theorem}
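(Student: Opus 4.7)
The plan is to rewrite the integrated differential equation using the reproducing formula for $A^2_\om$, converted into a representation in terms of $f'$, so that the Bloch estimate can be fed back to produce a self-bounded inequality for $\nm{f}_\B$. Since a general solution need not belong to $A^2_\om$, I would work with the dilations $f_r(z)=f(rz)$, $0<r<1$, which are analytic on $\overline{\D}$ and hence lie in $A^2_\om$; they satisfy $f_r''(\z)+r^2A(r\z)f_r(\z)=0$, and integration yields
\begin{equation*}
  f_r'(z)=f_r'(0)-r^2\int_0^z A(r\z)\,f_r(\z)\,d\z,\qquad z\in\D.
\end{equation*}

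The crucial step is to replace $f_r(\z)$ in the integrand by something involving $f_r'$. By~\eqref{eq:Bergmanrep-general} we have $f_r(\z)=\langle f_r,B_\z^\om\rangle_{A^2_\om}$, and Lemma~\ref{lm:green2.1}, together with the identity $B_\z^\om(0)=1/(2\om_1)=1$ valid since $\om$ is normalized, produces the $f'$-representation
\begin{equation*}
  f_r(\z)=4\int_\D f_r'(u)\,\overline{(B_\z^\om)'(u)}\,\om^\star(u)\,dm(u)+f_r(0).
\end{equation*}
Inserting this into the previous display, applying Fubini (which is trivially justified because $f_r$ is bounded on $\overline{\D}$), and using the Bloch bound $|f_r'(u)|\le\nm{f_r}_\B/(1-|u|^2)$ on the resulting inner integral leads to
\begin{equation*}
  |f_r'(z)-f_r'(0)|\le 4r^2\nm{f_r}_\B\!\int_\D\!\left|\int_0^z\!A(r\z)\overline{(B_\z^\om)'(u)}\,d\z\right|\!\frac{\om^\star(u)}{1-|u|^2}\,dm(u)+r^2|f_r(0)|\!\left|\int_0^z\!A(r\z)\,d\z\right|.
\end{equation*}

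Multiplying by $(1-|z|^2)$ and taking the supremum over $z\in\D$ yields the self-bounded inequality
\begin{equation*}
  \nm{f_r}_\B\le|f_r'(0)|+4r^2 X_r\,\nm{f_r}_\B+r^2|f_r(0)|\,Y_r,
\end{equation*}
where $X_r$ and $Y_r$ are the natural $r$-versions of $X_\B(A)$ and of $\sup_{z\in\D}(1-|z|^2)|\int_0^z A(\z)\,d\z|$ (obtained by replacing $A(\z)$ with $A(r\z)$). Hypothesis~\eqref{eq:Elimsup} ensures $4r^2 X_r<1$ for all $r$ sufficiently close to $1$, so the estimate rearranges to
\begin{equation*}
  \nm{f_r}_\B\le\frac{|f_r'(0)|+r^2|f_r(0)|Y_r}{1-4r^2 X_r}.
\end{equation*}

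The conclusion comes by letting $r\to 1^-$: uniform convergence of $f_r$ to $f$ on compact subsets of $\D$ yields $\nm{f}_\B\le\liminf_{r\to 1^-}\nm{f_r}_\B$ via the pointwise bound $(1-|z|^2)|f'(z)|\le\liminf_r(1-|z|^2)|f_r'(z)|$. The main technical obstacle is to pass to the limit on the right-hand side: one needs $|f_r'(0)|\to|f'(0)|$ (clear), $Y_r\to\sup_{z\in\D}(1-|z|^2)|\int_0^z A(\z)\,d\z|$, and $\limsup_r X_r\le X_\B(A)$. The former reduces, via the change of variable $\xi=r\z$, to the elementary inequality $(r^2-|\xi|^2)/r^3\le 1-|\xi|^2$ for $|\xi|<r<1$; the latter follows by dominated convergence in $u$ of the inner double integral for each fixed $z$, together with a careful interchange with the $z$-supremum, and simultaneously confirms $X_\B(A)<1/4$.
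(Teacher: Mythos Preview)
Your core argument matches the paper's: both apply the reproducing formula~\eqref{eq:Bergmanrep-general} together with Lemma~\ref{lm:green2.1} to rewrite $f_r'(z)$ as an integral of $f_r'$ against $\overline{(B_\z^\om)'}\,\om^\star$, and then absorb $\nm{f_r}_\B$.

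There are two gaps in your limit passage. First, the inequality $(r^2-|\xi|^2)/r^3\le 1-|\xi|^2$ is false (set $\xi=0$: it reads $1/r\le 1$). What is true is the trivial bound $(r^2-|\xi|^2)/r^3\le(1-|\xi|^2)/r^3$, giving $Y_r\le Y_1/r^3$ and hence $\limsup_r Y_r\le Y_1$. But you have not shown $Y_1<\infty$; the paper secures this by first deducing $A\in H^\infty_2$ from the hypothesis (the computation~\eqref{eq:bff}), a step you omit and which is already needed to conclude $\limsup_r\nm{f_r}_\B<\infty$.

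Second, and more serious, the claim $\limsup_r X_r\le X_\B(A)$ is unjustified. Dominated convergence gives $F_r(z)\to F_1(z)$ for each fixed $z$, whence $X_\B(A)=\sup_z F_1(z)\le\limsup_r\sup_z F_r(z)$ --- the opposite of what you want. Pushing a supremum under a limit from above requires uniform convergence or monotonicity, neither of which is available here. The paper avoids this difficulty entirely: once $f\in\B$ is established (which needs only $\limsup_r 4r^2X_r<1$ from~\eqref{eq:Elimsup} together with $\sup_r Y_r<\infty$), the inclusion $\B\subset A^2_\om$ allows one to rerun the whole argument with $r=1$, producing the exact norm bound directly with $X_\B(A)$ and no limiting step on the right-hand side.
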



\begin{proof}
Observe that $\om^\star(u)/(1-|u|^2)\asymp\widetilde{\om}(u)$ as $|u|\to1^-$,
since $\om\in\RR$ by the hypothesis.
For fixed $z\in\D$, Fubini's theorem and Lemma~\ref{Lemma:kernels-derivative} yield
\begin{equation} \label{eq:bff}
  \begin{split}
    &\limsup_{r\to 1^-}\,(1-|z|^2)\int_\D\left|\int_0^z\overline{(B_\z^\om)'(u)}A(r\z)\,d\z\right|\frac{\om^\star(u)}{1-|u|^2}\,dm(u)\\
    & \qquad \gtrsim(1-|z|^2)\int_\D\left|\int_0^z\overline{(B_\z^\om)'(u)}A(\z)\,d\z\right|\widetilde{\om}(u)\,dm(u)\\
    & \qquad \ge (1-|z|^2)\left|\int_0^z \langle 1,B_\z^{\widetilde{\om}}\rangle_{A^2_{\widetilde{\om}}} \, A(\z)\z\,d\z\right|
    \ge(1-|z|^2) \left|\int_0^zA(\z)\z\,d\z\right|,
  \end{split}
\end{equation}
and it follows that $A\in H^\infty_2$. Note that the use of the reproducing formula could be avoided by 
a straightforward integration.

Let $f$ be any solution of~\eqref{eq:de1}, and denote $f_r(z)=f(rz)$ for $0\leq r<1$. Then,
\begin{equation*}
  f_r'(z)=-\int_0^zf_r(\zeta)r^2A(r\zeta)\,d\zeta+f_r'(0),\quad z\in\D.
\end{equation*}
The reproducing formula~\eqref{eq:Bergmanrep-general} and Fubini's theorem imply
\begin{equation*}\label{eq:bloch1}
  \begin{split}
    f_r'(z)
    &=-\int_0^z\left(\int_\D f_r(u)\overline{B_\z^\om(u)}\om(u)\,dm(u)\right)r^2A(r\zeta)\,d\zeta+f_r'(0)\\
    &=-\int_\D f_r(u)\left(\int_0^z\overline{B_\z^\om(u)}r^2A(r\zeta)\,d\zeta\right)\om(u)\,dm(u)
    +f_r'(0), \quad z\in\D,
  \end{split}
\end{equation*}
from which the second part of Lemma~\ref{lm:green2.1} yields
\begin{equation*}
  \label{eq:bloch1.5}
  \begin{split}
    f_r'(z)
    &=-4\int_\D f'_r(u)\left(\int_0^z\overline{(B_\z^\om)'(u)}r^2A(r\zeta)\,d\zeta\right)\om^\star(u)\,dm(u)\\
    &\qquad -f_r(0)\int_0^zr^2A(r\z)\, d\z +f_r'(0), \quad z\in\D.
  \end{split}
\end{equation*}
It follows that
\begin{equation*}\label{eq:bloch3}
  \begin{split}
    \nm{f_r}_{\mathcal{B}}
    &\le4\, \|f_r\|_\B \, \sup_{z\in\D}\, (1-|z|^2)\int_\D\left|\int_0^z\overline{(B_\z^\om)'(u)}
      A(r\zeta)\,d\zeta\right|\frac{\om^\star(u)}{1-|u|^2}\,dm(u)\\
    &\qquad +|f(0)|\, \sup_{z\in\D}\, (1-|z|^2)\left|\int_0^z A(r\z)\, d\z\right|+|f'(0)|,
    \quad 0<r<1.
  \end{split}
\end{equation*}
We deduce $f\in\B$ by re-organizing the terms and letting $r\to 1^-$.
Now that $f\in\B \subset A^2_{\omega}$ (for the inclusion, see \cite[Proposition~6.1]{PelSum14}),
we may repeat the proof from the beginning with $r=1$ to deduce the second part of the assertion.
\end{proof}


\begin{remark} \label{remark:annulus}
The proof of Theorem~\ref{th:bloch-general} shows that, in order to conclude $f\in\mathcal{B}$, it suffices to take the
supremum in \eqref{eq:Elimsup} over any annulus $R<|z|<1$ instead of $\D$.
\end{remark}

We apply an operator theoretic argument to study the sharpness of Theorem~\ref{th:bloch-general}. Let
\begin{equation*}
  I(A,\omega) = \limsup_{r\to 1^-}\;\sup_{z\in\D}\,(1-|z|^2)\int_\D\left|\int_0^z\overline{(B_\z^\om)'(u)} A(r\z)\,d\z\right|
  \frac{\om^\star(u)}{1-|u|^2}\,dm(u)
\end{equation*}
denote the left-hand side of \eqref{eq:Elimsup}, for short.


\begin{theorem} \label{th:interc}
Let $\omega\in\mathcal{R}$ be normalized, and $A\in\mathcal{H}(\D)$.
The following conditions are equivalent:
\begin{enumerate}
\item[\rm (i)] $A\in\L{1}$;
\item[\rm (ii)] $I(A,\omega)<\infty$;
\item[\rm (iii)] the operator $S_A : \mathcal{B} \to \mathcal{B}$ is bounded.
\end{enumerate}
\end{theorem}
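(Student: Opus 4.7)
The plan is to establish the cycle $(i) \Rightarrow (ii) \Rightarrow (iii) \Rightarrow (i)$, each step by a different method. The easiest are $(i) \Leftrightarrow (iii)$, which follow from the Bloch growth together with Cauchy-type pointwise estimates. The implication $(ii) \Rightarrow (iii)$ is essentially a rerun of the proof of Theorem~\ref{th:bloch-general}, and the main obstacle is $(i) \Rightarrow (ii)$, which requires a direct reproducing-kernel estimate for regular weights.

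For $(iii) \Rightarrow (i)$ I test $S_A$ against the family $f_a(z) = \log \frac{1}{1-\overline{a}z}$, $a \in \D$. A short calculation gives $\|f_a\|_\B \leq 4$ uniformly and $|f_a(a)| = \log \frac{1}{1-|a|^2} \asymp \log \frac{e}{1-|a|}$ for $|a| \geq 1/2$. Since $(S_A f_a)''(z) = f_a(z) A(z)$ and every $g \in \B$ satisfies the pointwise estimate $|g''(a)|(1-|a|^2)^2 \lesssim \|g\|_\B$, evaluating at $z = a$ yields $|A(a)|(1-|a|^2)^2 \log \frac{e}{1-|a|} \lesssim \|S_A\|$ for $|a| \geq 1/2$. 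The case $|a| < 1/2$ is immediate by analyticity of $A$, so $A \in \L{1}$.

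For $(i) \Rightarrow (iii)$ assume $A \in \L{1}$ and $f \in \B$. The Bloch growth $|f(w)| \lesssim (|f(0)| + \|f\|_\B) \log \frac{e}{1-|w|}$ together with the bound $|A(w)| \lesssim \|A\|_{\L{1}}[(1-|w|^2)^2 \log \frac{e}{1-|w|}]^{-1}$ makes the two logarithms cancel in the product $|fA|$. Parametrizing $w = tz$, $t \in [0,1]$, the identity $\int_0^1 (1-t|z|)^{-2}\,dt = (1-|z|)^{-1}$ yields
\begin{equation*}
|(S_A f)'(z)|(1-|z|^2) \lesssim |z|(1-|z|^2) \|A\|_{\L{1}} \bigl(|f(0)| + \|f\|_\B\bigr)(1-|z|)^{-1} \lesssim \|A\|_{\L{1}} \bigl(|f(0)| + \|f\|_\B\bigr),
\end{equation*}
so $S_A : \B \to \B$ is bounded.

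For $(ii) \Rightarrow (iii)$ I follow the proof of Theorem~\ref{th:bloch-general}: the reproducing formula \eqref{eq:Bergmanrep-general} in $A^2_\omega$ combined with Lemma~\ref{lm:green2.1} represents $(S_A f)'(z)$ as an integral of $f'$ against an $\omega^\star$-weighted kernel, producing the estimate $\|S_A f\|_\B \lesssim I(A, \omega) \|f\|_\B + |f(0)| \sup_{z \in \D} (1-|z|^2)|\int_0^z A(\zeta)\,d\zeta|$; the last supremum is finite because the display \eqref{eq:bff} forces $A \in H^\infty_2$ whenever $I(A, \omega) < \infty$. The main obstacle is $(i) \Rightarrow (ii)$. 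I would invoke Lemma~\ref{Lemma:kernels-derivative} to rewrite $\overline{(B_\zeta^\omega)'(u)} = \zeta\, B_u^{\widetilde{\omega}}(\zeta)$, and then use the asymptotic $\omega^\star(u)/(1-|u|^2) \asymp \widetilde{\omega}(u)$ for $\omega \in \RR$ together with the standard $L^1$-kernel estimate $\int_\D |B_w^{\widetilde{\omega}}(u)| \widetilde{\omega}(u)\,dm(u) \lesssim \log \frac{e}{1-|w|}$ for regular weights, so the logarithm cancels the $\log$-factor in the $\L{1}$-bound on $A$. Parametrizing $\zeta = tz$ and using monotonicity of $s \mapsto [(1-s)^2 \log(e/(1-s))]^{-1}$ to control $A(r\zeta)$ uniformly in $r$, the remaining estimate reduces to $|z|^2 \int_0^1 t(1-t|z|)^{-2}\,dt \asymp (1-|z|)^{-1}$, which is absorbed by $(1-|z|^2)$. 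Hence $I(A, \omega) \lesssim \|A\|_{\L{1}}$, with the main technical point being the reproducing-kernel $L^1$-estimate, which rests on the Bergman-kernel theory for regular weights.
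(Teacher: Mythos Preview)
Your proposal is correct and follows essentially the same route as the paper: the cycle $(i)\Rightarrow(ii)\Rightarrow(iii)\Rightarrow(i)$ is established with the same tools---Fubini plus the $L^1$-kernel estimate (the paper cites \cite[Theorem~1]{PR:2016} and phrases it via $\widehat{\omega}$ rather than $\widetilde{\omega}$, but these are comparable for $\omega\in\RR$) for $(i)\Rightarrow(ii)$; the reproducing-formula argument from Theorem~\ref{th:bloch-general} together with \eqref{eq:bff} for $(ii)\Rightarrow(iii)$; and the logarithmic test functions $f_\zeta(z)=\log\frac{e}{1-\overline{\zeta}z}$ for $(iii)\Rightarrow(i)$. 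Your additional direct proof of $(i)\Rightarrow(iii)$ via the Bloch growth estimate and cancellation of logarithms is not in the paper but is a harmless (and pleasant) redundancy.
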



\begin{proof}
(i)$\implies$(ii): Observe that $\om^\star(u)/(1-|u|^2)\asymp\widehat{\om}(u)$ as $|u|\to1^-$.
By Fubini's theorem,
\begin{equation*}
  I(A,\omega)  \lesssim \limsup_{r\to 1^-}\;\sup_{z\in\D}\,(1-|z|^2) \int_0^z |A(r\z)|
  \int_\D\big| (B_\z^\om)'(u) \big| \,  \widehat{\om}(u) \,dm(u) \, |d\zeta|,
\end{equation*}
where
\begin{equation*}
  \int_\D\big| (B_\z^\om)'(u) \big| \,  \widehat{\om}(u) \,dm(u)
  \lesssim \int_0^{|\z|} \frac{\widehat{\widehat{\omega}}(t)\, dt}{\widehat{\omega}(t)(1-t)^2}
  \asymp \int_0^{|\z|} \frac{dt}{1-t^2} = \frac{1}{2} \log\frac{1+|\zeta|}{1-|\zeta|}, \quad \zeta\in\D,
\end{equation*}
by \cite[Theorem~1]{PR:2016}, Fubini's theorem and \eqref{eq:regular}. It follows that
$I(A,\omega) \lesssim \nm{A}_{\L{1}}<\infty$.

(ii)$\implies$(iii): This implication follows by an argument similar to the proof
of Theorem~\ref{th:bloch-general}. As in \eqref{eq:bff}, we know that
\begin{equation*}
  \sup_{z\in\D} \, (1-|z|^2)\int_\D\left|\int_0^z\overline{(B_\z^\om)'(u)}A(\z)\,d\z\right|\frac{\om^\star(u)}{1-|u|^2}\,dm(u)
  \leq I(A,\omega) < \infty,
\end{equation*}
and further, $A\in H^\infty_2$. Let $f\in\mathcal{B}\subset A^2_\omega$ (for the inclusion, see 
\cite[Proposition~6.1]{PelSum14}). The reproducing formula \eqref{eq:Bergmanrep-general}, Fubini's theorem
and Lemma~\ref{lm:green2.1} imply
\begin{equation*}
  \bnm{S_A(f)}_{\mathcal{B}} = \sup_{z\in\D} \, (1-|z|^2) \left| \int_0^z f(\zeta) A(\zeta) \, d\zeta \right|
  \lesssim \nm{f}_{\mathcal{B}} \, I(A,\omega) + |f(0)| \cdot \nm{A}_{H^\infty_2},
\end{equation*}
and hence we deduce (iii).

(iii)$\implies$(i): By assumption, there exists a constant $C>0$ such that
\begin{equation} \label{eq:mjm}
  \sup_{z\in\D}  \, (1-|z|^2)^2|f(z)|\, |A(z)| = \bnm{S_A(f)''}_{H^\infty_2}
  \lesssim \bnm{S_A(f)}_{\mathcal{B}} \leq C \big( \nm{f}_{\mathcal{B}} + |f(0)| \big)
\end{equation}
for any $f\in\mathcal{B}$. Consider the family of test functions
\begin{equation*}
  f_{\zeta}(z) = \log\frac{e}{1-\overline{\zeta} z}, \quad z,\zeta\in\D,
\end{equation*}
for which $\sup_{\zeta\in\D} \, \nm{f_\zeta}_{\mathcal{B}} \leq 2$. By \eqref{eq:mjm},
\begin{equation*}
  (1-|z|^2)^2 \left|\log\frac{e}{1-\overline{\zeta}z} \right| |A(z)| \leq 3C, \quad z,\zeta\in\D,
\end{equation*}
which gives the condition (i) for $\z=z$.
\end{proof}

If
\begin{equation}
  \label{eq:Elimsupnor}
  \sup_{z\in\D}\, (1-|z|^2)\int_\D\left|\int_0^z\overline{(B_\z^\om)'(u)} A(\z)\,d\z\right|
  \frac{\om^\star(u)}{1-|u|^2}\,dm(u)
\end{equation}
is sufficiently small, then a close look at the proof of Theorem~\ref{th:interc} implies that
\eqref{eq:Elimsup} is satisfied. As a~consequence, we obtain the following result.


\begin{corollary}\label{cor:bloch-general}
Let $\om\in\RR$ be normalized, and $A\in\HH(\D)$ such that \eqref{eq:Elimsupnor} is sufficiently small.
Then every solution $f$ of~\eqref{eq:de1} satisfies $f\in\mathcal{B}$.
\end{corollary}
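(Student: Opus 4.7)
The plan is to deduce Corollary~\ref{cor:bloch-general} from Theorem~\ref{th:bloch-general} by establishing the key inequality $I(A,\omega)\le C_\omega\,X_\B(A)$, where $X_\B(A)$ denotes the quantity in \eqref{eq:Elimsupnor} and $I(A,\omega)$ denotes the limsup in \eqref{eq:Elimsup}. Once this is in hand, choosing the threshold of smallness so that $C_\omega\,X_\B(A)<1/4$ verifies the hypothesis of Theorem~\ref{th:bloch-general}, and the conclusion is immediate.

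To produce this inequality, I will cycle through the chain of bounds implicit in Theorem~\ref{th:interc}, tracking how each step picks up at most a constant depending only on $\omega$. First, the chain of lower bounds established in \eqref{eq:bff} applies just as well with $r=1$ in place of the outer limsup, and combined with a standard Cauchy estimate upgrading $H^\infty_1$ to $H^\infty_2$ for the antiderivative $\int_0^\cdot A(\zeta)\zeta\,d\zeta$, it shows that $X_\B(A)<\infty$ already forces $A\in H^\infty_2$ with $\nm{A}_{H^\infty_2}\lesssim X_\B(A)$. Second, repeating the reproducing-formula argument from the proof of (ii)$\Rightarrow$(iii) of Theorem~\ref{th:interc}, but retaining the quantity $X_\B(A)$ (which is what the integral naturally produces before being weakened to $I(A,\omega)$ by Fatou), yields $\nm{S_A}_{\B\to\B}\lesssim X_\B(A)+\nm{A}_{H^\infty_2}\lesssim X_\B(A)$.

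Third, the implication (iii)$\Rightarrow$(i) of Theorem~\ref{th:interc}, tested against the family $f_\zeta(z)=\log(e/(1-\overline{\zeta}z))$, provides $\nm{A}_{\L{1}}\lesssim\nm{S_A}_{\B\to\B}$. Finally, the computation in the (i)$\Rightarrow$(ii) implication gives $I(A,\omega)\lesssim\nm{A}_{\L{1}}$, and stacking the four bounds proves the key inequality.

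The main obstacle is the slightly circuitous flavour of the argument: conceptually one would prefer a direct comparison $I(A,\omega)\lesssim X_\B(A)$ that does not detour through $\L{1}$ and $S_A$. The observation which makes the detour work is that the (ii)$\Rightarrow$(iii) step in Theorem~\ref{th:interc} is genuinely proved using $X_\B(A)$, and is only afterwards weakened to $I(A,\omega)$ so as to match the hypothesis of that theorem; rewinding this step recovers the quantitative link between $X_\B$ and $I$ through the operator $S_A$.
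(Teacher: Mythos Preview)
Your proposal is correct and is precisely what the paper has in mind: the paper's proof consists of the single sentence ``a close look at the proof of Theorem~\ref{th:interc} implies that \eqref{eq:Elimsup} is satisfied,'' and you have made that close look explicit by running the cycle $X_\B(A)\Rightarrow\nm{A}_{H^\infty_2}\Rightarrow\nm{S_A}_{\B\to\B}\Rightarrow\nm{A}_{\L1}\Rightarrow I(A,\omega)$. Your key observation---that the estimate in the (ii)$\Rightarrow$(iii) step of Theorem~\ref{th:interc} is genuinely proved with $X_\B(A)$ on the right, and only written as $I(A,\omega)$ after the trivial Fatou bound $X_\B(A)\le I(A,\omega)$---is exactly the point.
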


The little Bloch space $\mathcal{B}_0$ consists of those $f\in\mathcal{H}(\D)$ for which
\begin{equation*}
  \lim_{|z|\to 1^-} |f'(z)| (1-|z|^2)=0.
\end{equation*}
The following result is a counterpart of Theorem~\ref{th:bloch-general} concerning the little Bloch space.


\begin{theorem}
\label{b0short}
Let $\om\in\RR$ be normalized, and $A\in\HH(\D)$ such that
\begin{equation*}
  \limsup_{|z|\to 1^{-}}\, (1-|z|^2)
  \int_{\D}\abs{\int_0^z \overline{(B_\zeta^\omega)'(u)} A(\zeta)\,d\zeta}
  \frac{\omega^\star(u)}{1-|u|^2}\,dm(u)=0.
\end{equation*}
Then every solution $f$ of~\eqref{eq:de1} satisfies $f\in\mathcal{B}_0$.
\end{theorem}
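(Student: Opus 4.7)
The plan is to adapt the proof of Theorem~\ref{th:bloch-general} in order to track vanishing of $(1-|z|^2)|f'(z)|$ as $|z|\to 1^-$. The vanishing hypothesis implies that, for $R$ sufficiently close to $1$, the supremum of the quantity in~\eqref{eq:Elimsupnor} over the annulus $R<|z|<1$ is arbitrarily small. By Remark~\ref{remark:annulus} (applied in the spirit of Corollary~\ref{cor:bloch-general}), this forces $f\in\mathcal{B}$ and hence $f\in A^2_\om$. Repeating the computation in the proof of Theorem~\ref{th:bloch-general} with $r=1$ yields the identity
\begin{equation*}
  f'(z)=-4\int_\D f'(u)\brac{\int_0^z\overline{(B_\z^\om)'(u)}A(\z)\,d\z}\om^\star(u)\,dm(u)-f(0)\int_0^zA(\z)\,d\z+f'(0)
\end{equation*}
for every $z\in\D$. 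After multiplying by $(1-|z|^2)$, the first summand is dominated by $4\nm{f}_\mathcal{B}$ times the quantity in the hypothesis and hence vanishes as $|z|\to 1^-$, while the $f'(0)$ term trivially vanishes. The task therefore reduces to proving $(1-|z|^2)\bigl|\int_0^zA(\z)\,d\z\bigr|\to 0$.

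Write $\Psi_z(u)=\int_0^z\overline{(B_\z^\om)'(u)}A(\z)\,d\z$ and $K(z)=\int_0^z\z A(\z)\,d\z$. Lemma~\ref{Lemma:kernels-derivative} gives $\overline{(B_\z^\om)'(u)}=\z\,\overline{B_\z^{\widetilde\om}(u)}$, and the reproducing identity $\langle 1,B_\z^{\widetilde\om}\rangle_{A^2_{\widetilde\om}}=1$ together with Fubini's theorem yields $\int_\D\Psi_z(u)\widetilde\om(u)\,dm(u)=K(z)$. Since $\om\in\RR$, the bound $\widetilde\om(u)\lesssim\om^\star(u)/(1-|u|^2)$ is valid throughout $\D$ (with logarithmic slack near the origin), and the triangle inequality then gives
\begin{equation*}
  (1-|z|^2)|K(z)|\lesssim(1-|z|^2)\int_\D|\Psi_z(u)|\frac{\om^\star(u)}{1-|u|^2}\,dm(u),
\end{equation*}
so that $(1-|z|^2)|K(z)|\to 0$ as $|z|\to 1^-$ by hypothesis.

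Set $G(z)=\int_0^zA(\z)\,d\z$. Integration by parts along the radial segment from $0$ to $z$ gives $K(z)=z\,G(z)-\int_0^zG(\z)\,d\z$. From Theorem~\ref{th:bloch-general} (or more directly from the lower bound~\eqref{eq:bff}) we know that $A\in H^\infty_2$, whence $|G(\z)|\lesssim 1/(1-|\z|)$ and consequently
\begin{equation*}
  \biggl|\int_0^zG(\z)\,d\z\biggr|\lesssim\log\frac{e}{1-|z|}.
\end{equation*}
Multiplying this last estimate by $(1-|z|^2)/|z|$ yields a quantity that vanishes as $|z|\to 1^-$; combining with the vanishing of $(1-|z|^2)|K(z)|/|z|$ from the previous step gives $(1-|z|^2)|G(z)|\to 0$, which completes the argument. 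The main obstacle is precisely this two-step reduction: first extracting $(1-|z|^2)|K(z)|\to 0$ from the hypothesis by integrating against $\widetilde\om$, and then leveraging the a priori regularity $A\in H^\infty_2$ via integration by parts to upgrade this to the vanishing of $(1-|z|^2)|G(z)|$. Everything else tracks the template of Theorem~\ref{th:bloch-general}.
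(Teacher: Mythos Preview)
Your argument is correct and follows the same template as the paper: establish $f\in\mathcal{B}$ via Remark~\ref{remark:annulus}, rerun the reproducing-kernel identity from the proof of Theorem~\ref{th:bloch-general} with $r=1$, and check that each of the three terms in the resulting bound for $(1-|z|^2)|f'(z)|$ vanishes as $|z|\to 1^-$. The paper is terser on the $f(0)$-term: it extracts $(1-|z|^2)\bigl|\int_0^z A(\z)\z\,d\z\bigr|\to 0$ from the hypothesis exactly as in~\eqref{eq:bff}, but the displayed estimate carries $\int_0^z A(\z)\,d\z$ without the extra factor~$\z$, and the passage from one to the other is left to the reader. You supply this bridge explicitly via the integration-by-parts relation $K(z)=zG(z)-\int_0^z G(\z)\,d\z$ together with the a~priori bound $G\in H^\infty_1$ coming from $A\in H^\infty_2$; this is a clean way to close the step and is arguably more transparent than the paper's presentation.
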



\begin{proof}
As in \eqref{eq:bff}, we conclude
\begin{equation*}
  \limsup_{|z|\to 1^{-}} \, (1-|z|^2) \left| \int_0^z A(\zeta) \zeta \, d\zeta\right| = 0.
\end{equation*}

By assumption, there exists a constant $0<R<1$ such that
\begin{equation*}
  \sup_{R< |z| < 1} \, (1-|z|^2) \int_{\D}\abs{\int_0^z \overline{(B_\zeta^\omega)'(u)} A(\zeta)\,d\zeta}
  \frac{\omega^\star(u)}{1-|u|^2}\,dm(u) < \frac{1}{8}.
\end{equation*}
For fixed $z$, $R<|z|<1$, Lebesgue's dominated convergence theorem implies
\begin{equation*}
  \limsup_{r\to 1^-}\,(1-|z|^2)\int_\D\left|\int_0^z\overline{(B_\z^\om)'(u)}A(r\z)\,d\z\right|\frac{\om^\star(u)}{1-|u|^2}\,dm(u)
  <\frac{1}{8}.
\end{equation*}
We deduce a counterpart of \eqref{eq:Elimsup} with the supremum taken over the annulus $R< |z| < 1$.
By Remark~\ref{remark:annulus}, it follows that any solution $f$ of \eqref{eq:de1}
satisfies $f\in \mathcal{B} \subset A^2_\omega$ (for the inclusion, see \cite[Proposition~6.1]{PelSum14}).
As in the proof of Theorem~\ref{th:bloch-general}, we have
\begin{equation*}
  \begin{split}
    (1-|z|^2) |f'(z)|
    &\le4\, \|f\|_\B \, (1-|z|^2) \int_\D\left|\int_0^z\overline{(B_\z^\om)'(u)} A(\zeta)\,d\zeta\right|\frac{\om^\star(u)}{1-|u|^2}\,dm(u)\\
    &\quad +|f(0)| \, (1-|z|^2)\left|\int_0^z A(\z)\, d\z\right|+(1-|z|^2)|f'(0)|,
    \quad R < |z| < 1.
  \end{split}
\end{equation*}
The assertion follows.
\end{proof}

If $A\in\mathcal{H}(\D)$ and
\begin{equation*}
  \lim_{|z|\to 1^-} |A(z)| (1-|z|^2)^2\log\frac{e}{1-|z|} =0,
\end{equation*}
then every solution $f$ of~\eqref{eq:de1} satisfies $f\in\mathcal{B}_0$.
Actually, a straightforward modification of the proof of Corollary~\ref{cor:bloch-general}, 
by taking account on Remark~\ref{remark:annulus}, implies that $f\in\mathcal{B}$. Therefore
\begin{equation*}
  f''(z)=-A(z) \, \int_\D \frac{f(u)}{(1-\overline{u}z)^2}\,dm(u),\quad z\in\D.
\end{equation*}
By applying Lemma~\ref{lm:green2.1} twice, we obtain
\begin{equation*}
  |f''(z)| \lesssim
  |A(z)|\brac{|f(0)|+|f'(0)|+\nm{f''}_{H^\infty_2}\int_\D\frac{(1-|u|^2)^2}{\abs{1-\overline{u}z}^4}\,dm(u)},
  \quad z\in\D.
\end{equation*}
Since $f\in\mathcal{B}$, we deduce $f''\in H^\infty_2$, and hence the argument above shows that $f\in\mathcal{B}_0$
by~\cite[Lemma~3.10 and Theorem~5.13]{Z:2007}.


\section{Solutions of bounded and vanishing mean oscillation --- parallel results}\label{sec:bmoa_alt}

In this section, we consider two coefficient estimates, which are derived from the representation~\eqref{eq:cauchyIF}.
These estimates give sufficient conditions for all solutions of \eqref{eq:de1} to be in $\BMOA$ or $\VMOA$.
Recall that, by~\eqref{eq:bmoanorm} and \eqref{eq:sstar}, the particular measure 
$d\mu_f(z)=|f'(z)|^2(1-|z|^2)\,dm(z)$ satisfies
\begin{equation}
  \label{eq:cnbmoa}
  \nm{\mu_f}_{\textrm{Carleson}}\lesssim\nm{f}_{\BMOA}^2.
\end{equation}
Actually, $f\in\BMOA$ if and only if $\mu_f$ is a Carleson measure~\cite[Lemma~3.3]{G:1981}.


\begin{theorem} \label{th:bmoa}
Let $A\in\mathcal{H}(\D)$. If
\begin{equation}
  \label{eq:bmoacond}
  \limsup_{r\to 1^-}\,\sup_{a\in\D}\,
  \int_\D\brac{\frac{1}{2\pi}\int_0^{2\pi}
    \abs{\int_0^z\frac{A(r\zeta)\,d\zeta}{1-e^{-it}\zeta}}
    dt}^2(1-|\varphi_a(z)|^2)\,dm(z)
\end{equation}
is sufficiently small, then all solutions $f$ of~\eqref{eq:de1} satisfy $f\in\BMOA$.
\end{theorem}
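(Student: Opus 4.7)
The plan is to parallel the strategy of Theorem~\ref{th:bounded}, replacing the Cauchy-transform duality by the Fefferman duality $(H^1)^*\simeq\BMOA$ recalled in the introduction. Let $f$ be a solution of~\eqref{eq:de1} and write $f_r(z)=f(rz)$ for $0<r<1$; since $f_r$ is analytic on a neighbourhood of $\overline{\D}$, it lies in $\BMOA$ with finite (a priori possibly large) norm, and satisfies $f_r''+r^2A(r\,\cdot\,)f_r=0$. Repeating the opening steps of the proof of Theorem~\ref{th:bounded} --- the integrated form of the equation, the reproducing identity~\eqref{eq:cauchyIF} applied to $g=f_r$, and Fubini's theorem --- but now differentiating once in $z$ at the end, I would arrive at
\begin{equation*}
  f_r'(z)-f_r'(0)=-\frac{1}{2\pi}\int_0^{2\pi}f_r(e^{it})\,G_r(z,t)\,dt,\qquad G_r(z,t)=\int_0^z\frac{r^2 A(r\zeta)}{1-e^{-it}\zeta}\,d\zeta.
\end{equation*}

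The crucial observation is that for each fixed $z\in\D$, the function $t\mapsto\overline{G_r(z,t)}$ is the boundary trace of an analytic $H^1$-function whose $H^1$-norm equals the inner average
\begin{equation*}
  N_r(z):=\frac{1}{2\pi}\int_0^{2\pi}|G_r(z,t)|\,dt.
\end{equation*}
Indeed, expanding $(1-e^{-it}\zeta)^{-1}=\sum_{n\ge 0}e^{-int}\zeta^n$ inside $G_r(z,t)$ yields the absolutely convergent Fourier series $G_r(z,t)=\sum_{n\ge 0}c_n(z)\,e^{-int}$ with $c_n(z)=\int_0^z r^2A(r\zeta)\zeta^n\,d\zeta$, so that $h_z(w):=\sum_{n\ge 0}\overline{c_n(z)}\,w^n$ is analytic on $\{|w|<1/|z|\}$, has boundary values $h_z(e^{it})=\overline{G_r(z,t)}$, and has the claimed $H^1$-norm. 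Fefferman's theorem~\cite[Theorem~7.1]{G:2001} then supplies an absolute constant $C>0$ such that
\begin{equation*}
  |f_r'(z)-f_r'(0)|=\left|\frac{1}{2\pi}\int_0^{2\pi}f_r(e^{it})\overline{h_z(e^{it})}\,dt\right|\le C\bigl(\|f_r\|_{\BMOA}+|f(0)|\bigr)\,N_r(z).
\end{equation*}

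To finish, I would insert this pointwise bound into the invariant $L^2$ description of $\BMOA$ in~\eqref{eq:bmoanorm}. Splitting $|f_r'(z)|^2\le 2|f_r'(z)-f_r'(0)|^2+2|f_r'(0)|^2$, integrating against $(1-|\varphi_a(z)|^2)\,dm(z)$ and taking the supremum over $a\in\D$ yields
\begin{equation*}
  \|f_r\|_{\BMOA}^2\lesssim\bigl(\|f_r\|_{\BMOA}+|f(0)|\bigr)^2\sup_{a\in\D}\int_\D N_r(z)^2(1-|\varphi_a(z)|^2)\,dm(z)+|f'(0)|^2.
\end{equation*}
Because $N_r(z)^2$ equals $r^4\le 1$ times the squared inner average appearing in~\eqref{eq:bmoacond}, the hypothesis that~\eqref{eq:bmoacond} is sufficiently small makes this supremum arbitrarily small uniformly in $r$ close to $1$. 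The quadratic term $\|f_r\|_{\BMOA}^2$ can then be absorbed to the left, yielding $\sup_{r_0<r<1}\|f_r\|_{\BMOA}<\infty$ for some $r_0<1$. Since $f_r\to f$ locally uniformly on $\D$, lower semicontinuity of the invariant integral in~\eqref{eq:bmoadef} passes to the limit and gives $f\in\BMOA$.

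The main obstacle is the identification of $\overline{G_r(z,\cdot)}$ as an $H^1$-boundary trace with norm exactly $N_r(z)$: this is what brings the quantity inside~\eqref{eq:bmoacond} into play through Fefferman's duality. Once this step is in place, the absorbing trick is the same soft device already exploited in Sections~\ref{sc:bounded} and~\ref{Section:BMOA}.
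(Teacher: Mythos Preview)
Your argument is correct and follows the same route as the paper: your $h_z$ is the paper's auxiliary function $g_{r,z}$ (up to the factor $r^2$), and $N_r(z)=r^2\|g_{r,z}\|_{H^1}$. The only difference is that you invoke the Fefferman duality $(H^1)^*\simeq\BMOA$ as a black box to bound the pairing $\langle f_r,h_z\rangle$, whereas the paper derives the same estimate by hand via Lemma~\ref{lm:green2.1}, the Hardy--Stein--Spencer identity, and Carleson's theorem --- which is, of course, one of the standard ways to prove that duality.
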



\begin{proof}
By applying \eqref{eq:cauchyIF}  to $g\equiv 1$, we obtain
\begin{equation} \label{eq:Aprim2}
  \begin{split}
    \abs{\int_0^zA(r\zeta)\,d\zeta}
    &=\abs{\frac{1}{2\pi}\int_0^{2\pi}\int_0^z\frac{A(r\zeta)\,d\zeta}{1-e^{-it}\zeta}\,dt}
    \leq\frac{1}{2\pi}\int_0^{2\pi}\abs{\int_0^{z}\frac{A(r\zeta)\,d\zeta}{1-e^{-it}\zeta}}\,dt,
  \end{split}
\end{equation}
for $0\leq r\leq1$ and $z\in\D$. By~\eqref{eq:bmoanorm} and~\eqref{eq:bmoacond},
any second primitive of $A$ belongs to $\BMOA$.

Let $f$ be a solution of~\eqref{eq:de1}, and denote $f_r(z)=f(rz)$ for $0\leq r<1$. Then $f_r$ is
analytic in~$\overline{\D}$ and satisfies $f_r''(\zeta)+r^2A(r\zeta)f_r(\zeta)=0$. We deduce
\begin{equation*}
  f_r'(z)=-\int_0^zf_r(\zeta)r^2A(r\zeta)\,d\zeta+f_r'(0),\quad z\in\D.
\end{equation*}
By~\eqref{eq:cauchyIF} and Fubini's theorem,
\begin{equation*} \label{eq:bmoa}
  \begin{split}
    f_r'(z)
    &=-\frac{1}{2\pi}\int_0^{2\pi} f_r(e^{it})
    \int_0^z\frac{r^2A(r\zeta)}{1-e^{-it}\zeta}\,d\zeta\,dt+f_r'(0)\\
    &=-\frac{r^2}{2\pi}\int_0^{2\pi} f_r(e^{it})\overline{g_{r,z}(e^{it})}\,dt+f_r'(0),
  \end{split}
\end{equation*}
where
\begin{equation} \label{eq:bmoag}
  g_{r,z}(w)=\overline{\int_0^z\frac{A(r\zeta)}{1-\overline{w}\zeta}\,d\zeta},\quad w\in\D.
\end{equation}
Since $f_r,g_{r,z}\in H^2$, Lemma~\ref{lm:green2.1} implies
\begin{equation*} \label{eq:bmoau2}
  \begin{split}
    \frac{1}{2\pi}\int_0^{2\pi}f_r(e^{it})\overline{g_{r,z}(e^{it})}\,dt
    &= 2 \int_\D f_r'(w)\overline{g_{r,z}'(w)}\log\frac{1}{|w|}\, dm(w)
    +f_r(0)\overline{g_{r,z}(0)}.
  \end{split}
\end{equation*}
We deduce
\begin{equation*}
  \label{eq:bmoa0}
  |f_r'(z)|^2 \leq 8 \abs{\int_\D f_r'(w)\overline{g_{r,z}'(w)}\log\frac{1}{|w|}\,dm(w)}^2
  +2\abs{f_r(0)\overline{g_{r,z}(0)}-f_r'(0)}^2.
\end{equation*}
By the Hardy-Stein-Spencer formula
\begin{equation*}
  \int_\D\frac{|g_{r,z}'(w)|^2}{|g_{r,z}(w)|}\log\frac{1}{|w|}\,dm(w)
  \leq 2 \, \nm{g_{r,z}}_{H^1},
\end{equation*}
and hence by~\eqref{eq:cnbmoa} and Carleson's theorem~\cite[Theorem~9.3]{D:1970}, there exist
absolute constants $0<C<\infty$ and $0<C'<\infty$ such that
\begin{equation*}
  \label{eq:bmoa2}
  \begin{split}
    \abs{\int_\D f_r'(w)\overline{g_{r,z}'(w)}\log\frac{1}{|w|}\,dm(w)}^2
    &\leq \int_\D\frac{|g_{r,z}'(w)|^2}{|g_{r,z}(w)|}\log\frac{1}{|w|}\,dm(w)\\
    &\qquad \cdot\int_\D|g_{r,z}(w)||f_r'(w)|^2\log\frac{1}{|w|}\,dm(w)\\
    &\leq 2 \, \nm{g_{r,z}}_{H^1}C' \, \nm{\mu_{f_r}}_{\textrm{Carleson}} \, \nm{g_{r,z}}_{H^1}\\
    &=2  C\, \nm{g_{r,z}}_{H^1}^2\nm{f_r}_{\BMOA}^2.
  \end{split}
\end{equation*}
We have $|f_r'(z)|^2 \leq 16C\, \nm{g_{r,z}}_{H^1}^2\nm{f_r}_{\BMOA}^2+4\, |f_r(0)|^2|g_{r,z}(0)|^2+4\, |f_r'(0)|^2$,
and by~\eqref{eq:bmoanorm},
\begin{equation*} \label{eq:bmoaarvio2}
  \begin{split}
    \nm{f_r}_{\BMOA}^2
    &\leq
    64C \, \nm{f_r}_{\BMOA}^2\, 
    \sup_{a\in\D}\int_\D\nm{g_{r,z}}_{H^1}^2(1-|\varphi_a(z)|^2)\,dm(z)\\
    &\qquad + 16\, 
    |f_r(0)|^2 \, \sup_{a\in\D}\int_\D|g_{r,z}(0)|^2(1-|\varphi_a(z)|^2)\,dm(z)
    +16\, |f_r'(0)|^2.
  \end{split}
\end{equation*}
By re-organizing terms and letting $r\to 1^{-}$, the assertion follows.
\end{proof}


\begin{remark} \label{remark:annulusbmoa}
The proof of Theorem~\ref{th:bmoa} shows that, in order to conclude $f\in\BMOA$, it suffices to take the
supremum in \eqref{eq:bmoacond} over any annulus $R<|z|<1$ instead of $\D$.
\end{remark}


\begin{theorem}
\label{th:vmoa}
Let $A\in\mathcal{H}(\D)$ such that
\begin{equation*}
  \lim_{|a|\to 1^-}
  \int_\D\brac{\frac{1}{2\pi}\int_0^{2\pi}\abs{\int_0^z\frac{A(\zeta)d\zeta}{1-e^{-i\theta}\zeta}}dt}^2(1-|\varphi_a(z)|^2)\,dm(z)=0.
\end{equation*}
Then every solution $f$ of~\eqref{eq:de1} belongs to $\VMOA$.
\end{theorem}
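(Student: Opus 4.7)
The plan is to follow the argument of Theorem~\ref{th:bmoa} while tracking the dependence on $a$ carefully, and then to invoke the characterization
\begin{equation*}
f\in\VMOA\quad\Longleftrightarrow\quad\lim_{|a|\to 1^-}\int_\D|f'(z)|^2(1-|\varphi_a(z)|^2)\,dm(z)=0,
\end{equation*}
which follows from \eqref{eq:bmoanorm} and the definition of $\VMOA$. As a first step I would deduce that every solution $f$ of \eqref{eq:de1} already belongs to $\BMOA\subset H^2$: the vanishing hypothesis forces the integrand in \eqref{eq:bmoacond} with $r=1$ to be arbitrarily small on a suitable annulus $R<|a|<1$, and a dominated convergence argument (in the spirit of the proof of Theorem~\ref{b0short}) transfers this smallness to the $r$-dilated quantity needed by Theorem~\ref{th:bmoa}, whose conclusion then yields $f\in\BMOA$ via Remark~\ref{remark:annulusbmoa}.

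With $f\in H^2$ in hand, the Cauchy representation \eqref{eq:cauchyIF} applies to $f$ directly, and repeating the derivation of Theorem~\ref{th:bmoa} with $r=1$ gives, for every $a\in\D$,
\begin{equation*}
\int_\D|f'(z)|^2(1-|\varphi_a(z)|^2)\,dm(z)\le 16C\,\nm{f}_{\BMOA}^2\, I_1(a)+4|f(0)|^2\, I_2(a)+4|f'(0)|^2\, I_3(a),
\end{equation*}
where $I_1(a)$ is the integral in the hypothesis of the theorem, $I_2(a)=\int_\D|g_{1,z}(0)|^2(1-|\varphi_a(z)|^2)\,dm(z)$, $I_3(a)=\int_\D(1-|\varphi_a(z)|^2)\,dm(z)$, and $g_{r,z}$ is defined in \eqref{eq:bmoag}.

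It remains to show that $I_j(a)\to 0$ as $|a|\to 1^-$ for $j=1,2,3$: the case $j=1$ is exactly the hypothesis; evaluating \eqref{eq:Aprim2} at $w=0$ yields $|g_{1,z}(0)|\le\nm{g_{1,z}}_{H^1}$, so that $I_2(a)\le I_1(a)\to 0$; and a direct calculation using $1-|\varphi_a(z)|^2=(1-|a|^2)(1-|z|^2)/|1-\overline{a}z|^2$ gives $I_3(a)\lesssim 1-|a|^2$. Inserting these three vanishings into the displayed estimate and invoking the $\VMOA$ characterization above produces $f\in\VMOA$. The principal obstacle is the first step, namely the justification of the passage from the un-dilated vanishing hypothesis of the present theorem to the dilated condition required to trigger Theorem~\ref{th:bmoa}; all remaining estimates are routine.
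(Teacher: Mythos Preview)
Your proposal is correct and follows essentially the same route as the paper. The paper likewise first deduces $f\in\BMOA\subset H^1$ from Theorem~\ref{th:bmoa}, Remark~\ref{remark:annulusbmoa} and dominated convergence (explicitly comparing to the proof of Theorem~\ref{b0short}), then runs the estimates of Theorem~\ref{th:bmoa} with $r=1$ to bound $\nm{f_a}_{H^2}^2$ by the same three terms $I_1(a),I_2(a),I_3(a)$ and lets $|a|\to1^-$; your identification of the dilation passage as the principal obstacle matches the one point the paper also treats tersely.
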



\begin{proof}
First, by the assumption and \eqref{eq:Aprim2},
any second primitive of $A$ belongs to $\VMOA$.
Let $f$ be any solution of~\eqref{eq:de1}. By Theorem~\ref{th:bmoa}, Remark~\ref{remark:annulusbmoa},
and Lebesgue's dominated convergence theorem, we deduce $f\in{\BMOA}\subset H^1$; 
compare to the proof of Theorem~\ref{b0short}.
As in the proof of Theorem~\ref{th:bmoa}, we obtain
\begin{equation*}
  |f'(z)|^2\lesssim \nm{g_{r,z}}_{H^1}^2\nm{f}_{\BMOA}^2+|g_{r,z}(0)|^2|f(0)|^2+|f'(0)|^2,\quad z\in\D,
\end{equation*}
where $g_{r,z}$ is the function in~\eqref{eq:bmoag}. Hence, by~\eqref{eq:bmoanorm},
\begin{equation*}
  \begin{split} \label{eq:vmoa}
    \nm{f_a}_{H^2}^2
    &\lesssim\nm{f}_{\BMOA}^2\int_\D\nm{g_{r,z}}_{H^1}^2(1-|\varphi_a(z)|^2)\,dm(z)\\
    &\quad +|f(0)|^2\int_\D\abs{g_{r,z}(0)}^2(1-|\varphi_a(z)|^2)\,dm(z)\\
    &\quad+|f'(0)|^2(1-|a|^2)\int_\D\frac{1-|z|^2}{|1-\overline{a}z|^2}\,dm(z).
  \end{split}
\end{equation*}
The assertion follows by letting $|a|\to 1^-$.
\end{proof}


\section{Hardy spaces} \label{sec:hardy}
For $0<p< \infty$, the Hardy space $H^p$ consists of those $f\in\HH(\D)$ for which
\begin{equation*}
  \nm{f}_{H^p}^p  =\sup_{0\leq r<1}\;\frac{1}{2\pi}\int_0^{2\pi}|f(re^{i\theta})|^p\,d\theta<\infty.
\end{equation*}


\begin{proof}[Proof of Theorem~\ref{prop:main}]
The case $p=2$ follows from the Littlewood-Paley identity by standard estimates, 
and if $k=1$ then much more is true, see~\cite[Theorem~1.2]{P:2009}.

(i) We proceed to prove the following preliminary estimate.
If $0<p<2$, $k\in\N$ and $0<r<1$, then
\begin{equation} \label{eq:preli}
  \|f_r\|_{H^p}^p\lesssim\int_{\D}|f_r(z)|^{p-2}|f_r^{(k)}(z)|^2(1-|z|^2)^{2(k-1)+1}\,dm(z)
  +\frac{\bigg(\sum\limits_{j=0}^{k-1}|f^{(j)}(0)|^p\bigg)^{2/p}}{\|f_r\|_{H^p}^{2-p}}
\end{equation}
for all $f\in\mathcal{H}(\D)$, $f\not\equiv 0$. Here $f_r(z) = f(rz)$ for $z\in\D$.
Write $$d\mu_r(z)=|f_r^{(k)}(z)|^2(1-|z|^2)^{2(k-1)}\,dm(z),$$ for short.
The following argument relies on a classical characterization of $H^p$ spaces which
involves non-tangential approach regions; see \cite[p.~125]{AB:1988}, for example. 
For a fixed $1<\alpha<\infty$, the non-tangential approach region $\Gamma(\zeta)$ 
with vertex at $\zeta\in\T=\partial\D$, of aperture $2\arctan\sqrt{\alpha^2-1}$, is
$\Gamma(\zeta) =\set{z\in\D\,:\, |z-\zeta|\leq\alpha(1-|z|)}$.
The corresponding non-tangential maximal function is given by
\begin{equation*}
  f^\star(\zeta)=\sup_{z\in\Gamma(\zeta)}\;|f(z)|,\quad \zeta\in\T.
\end{equation*}
Fubini's theorem and H\"older's inequality (with indices $2/(2-p)$ and $2/p$) yield
\begin{equation*}
  \begin{split}
    \|f_r\|_{H^p}^p&\asymp\int_{\T}\left(\int_{\Gamma(\z)}d\mu_r(z)\right)^\frac{p}{2}|d\z|
    +\sum_{j=0}^{k-1}|f_r^{(j)}(0)|^p\\
    &\le\int_{\T} f_r^\star(\z)^{(2-p)\frac{p}{2}}\left(\int_{\Gamma(\z)}|f_r(z)|^{p-2}\, d\mu_r(z)\right)^\frac{p}{2}|d\z|
    +\sum_{j=0}^{k-1}|f^{(j)}(0)|^p\\
    &\le\left(\int_{\T} f_r^\star(\z)^p \, |d\z|\right)^\frac{2-p}{2}
    \left(\int_{\T}\int_{\Gamma(\z)}|f_r(z)|^{p-2}\,d\mu_r(z)|d\z|\right)^\frac{p}{2}
    +\sum_{j=0}^{k-1}|f^{(j)}(0)|^p\\
    &\lesssim\|f_r\|_{H^p}^{p(1-\frac{p}{2})}
    \left(\int_{\D}|f_r(z)|^{p-2}(1-|z|^2)\,d\mu_r(z)\right)^\frac{p}{2}
    +\sum_{j=0}^{k-1}|f^{(j)}(0)|^p,
  \end{split}
\end{equation*}
where the last inequality follows from~\cite[pp.~55--56]{G:1981}. Estimate~\eqref{eq:preli} follows by re-organizing the terms.

By a change of variable, we get
\begin{align}
  & \int_{\D}|f_r(z)|^{p-2}|f_r^{(k)}(z)|^2(1-|z|^2)^{2(k-1)+1}\,dm(z)\notag\\
  & \qquad \leq \int_{\D}|f(z)|^{p-2}|f^{(k)}(z)|^2\left(1-|z|^2\right)^{2k-1} \,dm(z). \label{eq:bound}
\end{align}
By means of \eqref{eq:preli} we conclude that, if \eqref{eq:bound} is finite then $f\in H^p$ and
\begin{equation} \label{eq:preli2}
  \|f\|_{H^p}^p\lesssim \int_{\D}|f(z)|^{p-2}|f^{(k)}(z)|^2\left(1-|z|^2\right)^{2k-1} \,dm(z)
  +\frac{\bigg(\sum\limits_{j=0}^{k-1}|f^{(j)}(0)|^p\bigg)^{2/p}}{\|f\|_{H^p}^{2-p}}.
\end{equation}
Cauchy's integral formula, and the estimate $|f(z)|\lesssim \nm{f}_{H^p} (1-|z|^2)^{-1/p}$ for $z\in\D$ \cite[p.~36]{D:1970}, give
$|f^{(j)}(0)|^2 \lesssim \nm{f}^{2-p}_{H^p} \cdot |f^{(j)}(0)|^p$ for $j=0,1,\dotsc, k-1$, which implies
\begin{equation} \label{eq:sum}
  \bigg(\sum_{j=0}^{k-1}|f^{(j)}(0)|^p\bigg)^{2/p} \lesssim \sum_{j=0}^{k-1}|f^{(j)}(0)|^2
  \lesssim \nm{f}^{2-p}_{H^p} \, \sum_{j=0}^{k-1}|f^{(j)}(0)|^p.
\end{equation}
Now \eqref{eq:preli2} and \eqref{eq:sum} prove \eqref{eq:i}.

(ii) Let $2<p<\infty$. Write $q=p-2$ and
$d\mu(z)=|f^{(k)}(z)|^2(1-|z|^2)^{2(k-1)+1}\,dm(z)$, for short. For
$z\in\D$, let $I(z)=\big\{\zeta\in\T :  z\in\Gamma(\zeta) \big\}$
and note that its Euclidean arc length satisfies $|I(z)|\asymp1-|z|$. Fubini's theorem, 
H\"older's inequality (with indices $p/q$ and $p/(p-q)$) and~\cite[pp.~55--56]{G:1981} yield
\begin{equation*}
  \begin{split}
    \int_{\D}|f(z)|^{q}\,d\mu(z)&\asymp\int_{\D}\left(\int_{I(z)}|d\z|\right)\frac{|f(z)|^{q}}{1-|z|^2}\,d\mu(z)
    =\int_{\T}\int_{\Gamma(\z)}\frac{|f(z)|^{q}}{1-|z|^2}\,d\mu(z)\, |d\z|\\
    &\le\left(\int_{\T} f^\star(\z)^p \, |d\z| \right)^\frac{q}{p}
    \left(\int_{\T}\left(\int_{\Gamma(\z)}\frac{d\mu(z)}{1-|z|^2}\right)^\frac{p}{p-q}|d\z|\right)^\frac{p-q}{p}\\
    &\lesssim\|f\|_{H^p}^q\left(\int_{\T}\left(\int_{\Gamma(\z)}|f^{(k)}(z)|^2(1-|z|^2)^{2(k-1)}\,dm(z)\right)^\frac{p}{p-q}|d\z|\right)^\frac{p-q}{p}\\
    &\lesssim\|f\|_{H^p}^{p-2}\Bigg(\|f\|_{H^p}^p-\sum_{j=0}^{k-1}|f^{(j)}(0)|^p\Bigg)^\frac{2}{p}\lesssim\|f\|_{H^p}^p,
  \end{split}
\end{equation*}
and the assertion of (ii) is proved.

(iii) If $f$ is uniformly locally univalent, then $\sup_{z\in\D} \, \left|f''(z)/f'(z) \right| (1-|z|^2)$
is bounded by a constant depending on $\delta$ \cite[Theorem~2]{Y:1977}. Here
$0<\delta\leq 1$ is a constant such that $f$ is univalent in each pseudo-hyperbolic
disc $\Delta_p(z,\delta)$ for $z\in\D$. Since
\begin{equation*}
  \bigg( \frac{f^{(k)}}{f'} \bigg)^\prime = \frac{f^{(k+1)}}{f'} - \frac{f''}{f'} \cdot \frac{f^{(k)}}{f'}, \quad k\in\N,
\end{equation*}
by induction we conclude $\nm{f^{(k+1)} / f'}_{H^\infty_k}< \infty$ for $k\in\N$.
By means of the Hardy-Stein-Spencer formula, we deduce
\begin{equation*}
  \begin{split}
    & \int_\D |f(z)|^{p-2} |f^{(k)}(z)|^2 (1-|z|^2)^{2k-1} \, dm(z) \\
    & \quad \lesssim \bigg\lVert \frac{f^{(k)}}{f'} \bigg\lVert_{H^{\infty}_{k-1}}^2 \int_\D |f(z)|^{p-2} |f'(z)|^2 \log\frac{1}{|z|} \, dm(z)
    \lesssim \nm{f}^p_{H^p},
  \end{split}
\end{equation*}
where the comparison constant depends on $\delta$ and $p$. This concludes the proof of Theorem~\ref{prop:main}.
\end{proof}


\subsection{A class of functions for which Question~\ref{probnorm} has an affirmative answer} \label{sec:zerofree}

If $f\in\mathcal{H}(\D)$ is non-vanishing, then $g = f^{(p-2)/2} f' \in\mathcal{H}(\D)$ and
$g' = \frac{p-2}{2} f^{\frac{p-4}{2}} (f')^2 + f^{\frac{p-2}{2}} f''$.
The Hardy-Stein-Spencer formula~\eqref{eq:hssf} implies
\begin{equation} \label{eq:hssapp1}
  \nm{f}_{H^p}^p
  \leq |f(0)|^p + C_1 \, p^2 \int_{\D} |g(z)|^2 (1-|z|^2) \, dm(z),
\end{equation}
where $0<C_1<\infty$ is an absolute constant.
By standard estimates, there exists an~absolute constant $0<C_2<\infty$ such that
\begin{align*}
  \int_{\D} |g(z)|^2 (1-|z|^2) \, dm(z)
  & \leq  C_2 \left( |g(0)|^2 + \int_{\D} |g'(z)|^2 (1-|z|^2)^3 \, dm(z) \right).
\end{align*}
By~\eqref{eq:hssapp1}, we deduce
\begin{align*}
  \nm{f}_{H^p}^p
  & \leq |f(0)|^p + C_1 C_2 \, p^2 \, \bigg\lVert \frac{f'}{f} \bigg\rVert_{H^\infty_1}^{2-p} \, |f'(0)|^p
    + 2 \, C_1 C_2 \, (p-2)^2 \, \bigg\lVert \frac{f'}{f} \bigg\rVert_{H^\infty_1}^{2} \, \nm{f}_{H^p}^p \\
  & \qquad +  2 C_1 C_2 \, p^2 \int_{\D} |f(z)|^{p-2} |f''(z)|^2 (1-|z|^2)^3 \, dm(z).
\end{align*}

In conclusion, if $f\in\mathcal{H}(\D)$ is non-vanishing and $\nm{f'/f }_{H^\infty_1} = \nm{\log f}_{\B}$ 
is sufficiently small, then \eqref{eq:hpest} holds with $C(p)\asymp p^2$ as $p\to 0^+$.


\subsection{Applications to differential equations} \label{sec:applications}

Theorem~\ref{prop:main} induces an alternative proof for
a special case of \cite[Theorem~1.7]{R:2007}).


\begin{oldtheorem}
\label{th:hp}
Let $0<p\leq 2$ and $A\in\mathcal{H}(\D)$. If \eqref{eq:hpold}
is sufficiently small (depending on~$p$), then all solutions $f$ of~\eqref{eq:de1} satisfy $f\in H^p$.
\end{oldtheorem}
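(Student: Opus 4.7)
The plan is to apply Theorem~\ref{prop:main}(i) with $k=2$ to the dilates $f_r(z) = f(rz)$, $r \in [1/2,1)$, which belong to $H^\infty \subset H^p$, and combine the resulting equivalent-norm estimate with Carleson's embedding $H^p \hookrightarrow L^p(\mu)$ for the auxiliary measure $d\mu_r(z) = |A(rz)|^2 (1-|z|^2)^3 \, dm(z)$. Since $f$ solves~\eqref{eq:de1}, one has $f_r''(z) = -r^2 A(rz) f_r(z)$, and the identity $|f_r|^{p-2}|f_r''|^2 = r^4 |A(rz)|^2 |f_r|^p$ turns the bound supplied by Theorem~\ref{prop:main}(i) into
\[
  \|f_r\|_{H^p}^p \lesssim r^4 \int_\D |f_r(z)|^p \, d\mu_r(z) + |f(0)|^p + |f'(0)|^p.
\]

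The main obstacle is to show that $\mu_r$ is a Carleson measure whose Carleson norm is bounded, uniformly in $r \in [1/2,1)$, by a fixed multiple of the quantity $S$ from~\eqref{eq:hpold}, which up to constants equals the Carleson norm of $\nu = |A|^2 (1-|z|^2)^3 \, dm$. I would split the estimate of $\mu_r(S_a)/(1-|a|)$ into two cases. When $|a| \leq r$, the change of variable $w = rz$, together with the inclusion $rS_a \subset S_{ra}$ and the elementary bound $1-r|a| \leq 2(1-|a|)$ (which holds whenever $r \geq |a|$, since $|a|+1/|a|\geq 2$), reduces $\mu_r(S_a)$ to $\nu(S_{ra}) \lesssim S(1-|a|)$. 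When $|a|>r$, the Carleson property of $\nu$ forces $A \in H^\infty_2$ with $\|A\|_{H^\infty_2}\lesssim S^{1/2}$, via a standard subharmonic mean-value estimate over $D(a,(1-|a|)/2)$; this yields $|A(rz)|^2 \lesssim S/(1-r^2)^4$ on $S_a$, and combining with $\int_{S_a}(1-|z|^2)^3\,dm \lesssim (1-|a|)^5$ and the inequality $1-|a| < 1-r^2$ (forced by $|a|>r$) gives $\mu_r(S_a) \lesssim S(1-|a|)$ in the second case as well.

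With the uniform Carleson bound in hand, Carleson's theorem ($H^p \hookrightarrow L^p(\mu_r)$, valid for all $0<p<\infty$) applied to $f_r \in H^p$ yields $\int_\D |f_r|^p\,d\mu_r \leq C(p)\,S\,\|f_r\|_{H^p}^p$. Inserting this into the displayed inequality and choosing $S$ so small (depending on $p$) that $C(p)S \leq 1/2$, one rearranges to obtain $\|f_r\|_{H^p}^p \lesssim |f(0)|^p + |f'(0)|^p$ uniformly in $r$. Letting $r \to 1^-$ and using $\|f\|_{H^p} = \lim_{r\to 1^-}\|f_r\|_{H^p}$ concludes that $f \in H^p$.
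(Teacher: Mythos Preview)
Your proposal is correct and follows essentially the same route as the paper: apply Theorem~\ref{prop:main}(i) with $k=2$ to the dilates $f_r$, use the equation to rewrite $|f_r|^{p-2}|f_r''|^2$ as $r^4|A(r\cdot)|^2|f_r|^p$, invoke Carleson's theorem for $H^p$, and let $r\to 1^-$. The only substantive difference is in how the uniform-in-$r$ Carleson bound for $d\mu_r=|A(rz)|^2(1-|z|^2)^3\,dm(z)$ is obtained: the paper simply refers back to the argument for \eqref{eq:firstclaim} in the proof of Theorem~\ref{th:bmoa2} (splitting at $|a|=1/(2-r)$ and using $|1-\overline{a}z|\le 2|1-\overline{a}z/r|$), whereas you supply a self-contained proof splitting at $|a|=r$, with the change of variable $rS_a\subset S_{ra}$ in one case and the pointwise bound $\nm{A}_{H^\infty_2}^2\lesssim S$ in the other. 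Both verifications are straightforward and yield the same conclusion.
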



\begin{proof}
Note that
\begin{equation}
  \label{eq:hp}
  \limsup_{r\to 1^-}\,
  \sup_{a\in\D}\,
  \int_{\D} |A(rz)|^2(1-|z|^2)^2 (1-|\varphi_a(z)|^2) \, dm(z)
\end{equation}
is at most a constant multiple of \eqref{eq:hpold}; compare to the proof of Theorem~\ref{th:bmoa2}.
Let $f$ be a~solution of~\eqref{eq:de1}, and $f_r(z)=f(rz)$ for $0<r<1$.
By Theorem~\ref{prop:main}(i), we deduce
\begin{align*}
  \nm{f_r}_{H^p}^p & \lesssim \int_{\D} |f_r(z)|^{p-2} r^2 |f''(rz)|^2 (1-|z|^2)^3 \, dm(z) + |f(0)|^p + |f'(0)|^p\\
                   &  \lesssim \int_{\D} |f_r(z)|^p \, |A(rz)|^2 (1-|z|^2)^3 \, dm(z) + |f(0)|^p + |f'(0)|^p
\end{align*}
for any $0<p\leq 2$. If \eqref{eq:hp} is sufficiently small, then
Carleson's theorem~\cite[Theorem~9.3]{D:1970} implies that
$\nm{f_r}_{H^p}$ is uniformly bounded for all sufficiently large $0<r<1$.
By letting $r\to 1^{-}$, we obtain $f\in H^p$.
\end{proof}

An argument similar to the one above, taking advantage of Theorem~\ref{prop:main}(i),
leads to a characterization of $H^p$ solutions of \eqref{eq:de1}:
if $0<p\leq 2$, $f$ is a solution of~\eqref{eq:de1} and $d\mu_A(z)=|A(z)|^2(1-|z|^2)^3\, dm(z)$ is a~Carleson measure,
then $f\in H^p$ if and only if
\begin{equation} \label{eq:ee}
  \int_\D |f(z)|^p \, d\mu_A(z) < \infty.
\end{equation}
For example, if $f$ is a normal (in the sense of Lehto and Virtanen)
solution of~\eqref{eq:de1} and $\mu_A$ is a Carleson measure, then \eqref{eq:ee} holds
for all sufficiently small $0<p<\infty$ by~\cite[Corollary~9]{GNR:preprint}.


\begin{remark}
If Question~\ref{probnorm} had an affirmative answer, then
Theorem~\ref{th:hp} would admit the following immediate improvement:
if $A\in H(\D)$ such that \eqref{eq:hp} is finite, then
any solution~$f$ of~\eqref{eq:de1} satisfies $f\in \bigcup_{0<p<\infty} H^p$.
\end{remark}


\end{document}